\let\ORIlabel\label
\let\ORIrefstepcounter\refstepcounter
\AddToHook{package/hyperref/before}{%
  \let\label\ORIlabel
  \let\refstepcounter\ORIrefstepcounter
}
\documentclass[onefignum,onetabnum,nohypdvips]{siamart171218}

\headers{}{}

\title{
Accurately computing quasiperiodic parabolic equations within finite-size domains via modeling quasiperiodic boundary conditions
\thanks{Submitted to xxx.
}
}
\author{
Xiaofang Han
\thanks{
Hunan Key Laboratory for Computation and Simulation in Science and Engineering,
Key Laboratory of Intelligent Computing and Information Processing of Ministry
of Education, School of Mathematics and Computational Science, Xiangtan University,
Xiangtan, Hunan, 411105, China
(\email{xiaofang\_han@smail.xtu.edu.cn},
\email{kaijiang@xtu.edu.cn},
\email{limeng@smail.xtu.edu.cn}).
}
\and
Kai Jiang\footnotemark[2]
\and
Meng Li\footnotemark[2]
}

\usepackage{mathrsfs,amsmath,amssymb,bm}
\usepackage{lipsum}
\usepackage{amsfonts}
\usepackage{caption,graphicx, subfigure}
\usepackage{epstopdf}
\usepackage{makecell, rotating, bbding}
\usepackage{multirow}
\usepackage{algorithmic, algorithm}
\usepackage{enumerate}
\usepackage{booktabs}
\usepackage[misc]{ifsym}
\usepackage{mathtools}
\usepackage{amsmath}
\usepackage{cases}
\usepackage{booktabs}

\newcommand{\bx}{\bm{x}}

\newtheorem{thm}{Theorem}[section]

\newtheorem{remark}[thm]{Remark}

\usepackage[draft]{changes}
\usepackage{lipsum}

\newsiamremark{hypothesis}{Hypothesis}
\crefname{hypothesis}{Hypothesis}{Hypotheses}
\newsiamthm{claim}{Claim}

\usepackage{amsopn}

\makeatletter
\newcommand*{\addFileDependency}[1]{
  \typeout{(#1)}
  \@addtofilelist{#1}
  \IfFileExists{#1}{}{\typeout{No file #1.}}
}
\makeatother

\begin{document}

\maketitle
	
\begin{abstract}
Quasiperiodic systems exhibit long-range order without decay and are naturally posed on the whole space. However, in practical applications, computations are performed on finite
domains, making the choice of boundary conditions that preserve the global
quasiperiodic structure a key modeling challenge. In particular,
conventional boundary conditions contain no information about the
quasiperiodic field beyond the computational domain. Traditional periodic boundary conditions (PBCs) suffer from Diophantine errors due to the rational approximation of irrational numbers, limiting  their  accuracy. Motivated by this, we propose a class of quasiperiodic boundary conditions (QBCs) for quasiperiodic problems, which avoid the limitations caused by traditional Diophantine errors.  By exploiting a homomorphism between a low-dimensional physical domain and a high-dimensional torus, QBCs effectively capture the long-range structure at the boundaries. To validate the proposed approach, we apply QBCs to solve quasiperiodic parabolic equations (QPEs) within finite-size domains and establish rigorous convergence results. Numerical experiments demonstrate that QBCs substantially reduce the influence of Diophantine errors. When employed to model finite-size QPEs and combined with suitable numerical discretizations, they enable accurate and efficient computations for both high- and low-regularity cases, while exhibiting improved convergence compared with PBCs.

\end{abstract}

\begin{keywords}
Finite-size quasiperiodic parabolic equations, Quasiperiodic boundary conditions, Finite point recovery method, Convergence analysis.
\end{keywords}

\begin{AMS}
65M06, 65M12, 35B15, 35K20, 65D05
\end{AMS}

\section{Introduction}\label{sec:int} 
Quasiperiodic parabolic equations provide a natural model for physical processes in media with long-range order but no translational invariance.
Typical applications include heat conduction in quasicrystals, diffusion in
porous media, and transport phenomena in fluids
\cite{archambault1997thermal,gao2023effects,maestrello1979quasiperiodic,
mathieu2016method,takeda1999quasiperiodic}.
Such problems are conventionally formulated on the whole space.
In practical applications, however, quasiperiodic materials are usually 
confined to finite spatial domains, which motivates the formulation of
QPEs on bounded domains together with suitable
boundary conditions that preserve the global quasiperiodic structure.

Accordingly, we consider the linear QPE on a finite-size domain 
\(\Omega\times(0,T)\subseteq\mathbb{R}^d\times\mathbb{R}\), hereafter referred
to as finite-size QPE
\begin{equation}\label{eq:intro_qpe_model}
\begin{cases}
\dfrac{\partial u(\bm{x},t)}{\partial t}
-\mathrm{div}\bigl(\alpha(\bm{x})\nabla u(\bm{x},t)\bigr)
=f(\bm{x},t),
& (\bm{x},t)\in\Omega\times(0,T),\\[2mm]
u(\bm{x},0)=u_0(\bm{x}),
& \bm{x}\in\Omega,\\[1mm]
u(\bm{x},t)\ \text{satisfies a suitable boundary condition},
& (\bm{x},t)\in\partial\Omega\times(0,T).
\end{cases}
\end{equation}
Here, the coefficient $\alpha(\bm{x})$ is quasiperiodic
(see \Cref{def:QP_fun}) and satisfies the uniform ellipticity condition
\[
\gamma_1 |\bm{\lambda}|^2
\leq
\alpha(\bm{x}) |\bm{\lambda}|^2
\leq
\gamma_2 |\bm{\lambda}|^2,
\qquad
\bm{x}\in\mathbb{R}^d,\quad
\bm{\lambda}\in\mathbb{R}^d,
\]
where $0<\gamma_1\leq\gamma_2$.
We focus on the linear QPE, which serves as a fundamental setting for the subsequent analysis and provides a basis for extensions to nonlinear cases.

QPEs naturally arise on the whole space. Finite-size modeling therefore requires
appropriate boundary conditions. Such boundary conditions should account for the
quasiperiodic information from the external region while preserving the quasiperiodic
structure within the domain. The lack of translational invariance and decay in
quasiperiodic solutions makes the construction of such boundary conditions challenging. 
A widely used approach is periodic boundary conditions (PBCs), 
which impose
\[
u(\bm{x} + \bm{L}, t) = u(\bm{x}, t), \quad (\bm{x},t) \in \partial \Omega \times (0, T), \quad \bm{L} = (L_j)_{j=1}^d.
\]
\begin{figure}[htbp!]
\centering
\subfigure[]{
\begin{minipage}[t]{0.40\linewidth}
    \centering
    \includegraphics[width=\linewidth]{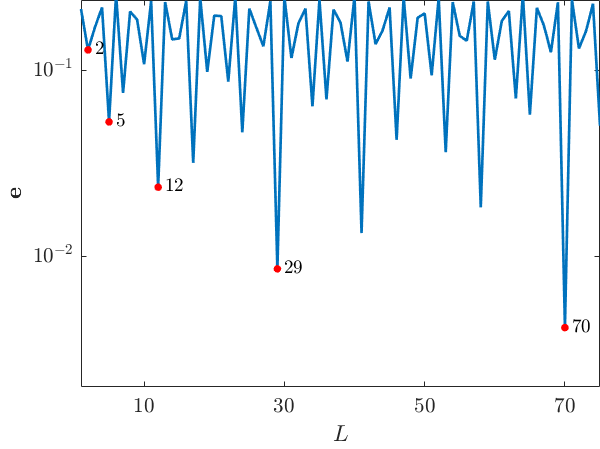}
     \label{fig:pbc_1}
\end{minipage}%
}
\hfill
\subfigure[]{
\begin{minipage}[t]{0.40\linewidth}
    \centering
    \includegraphics[width=\linewidth]{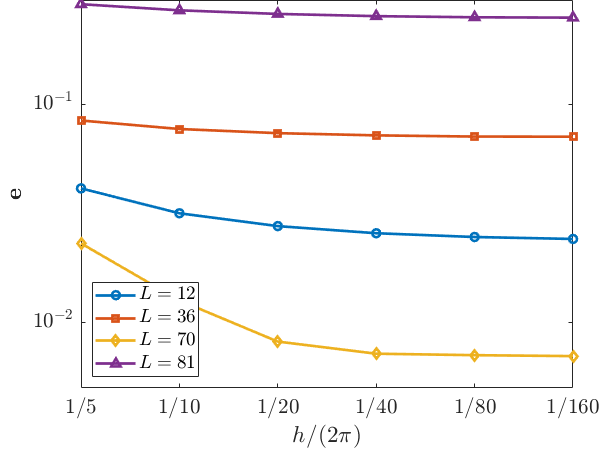}
     \label{fig:pbc_2}
\end{minipage}
}
\caption{
PBC errors of the QPE \eqref{eq:intro_qpe_model} with $\alpha(x)=\cos x+\cos \sqrt{2}x+6$. \Cref{fig:pbc_1} shows the errors against $L$ with $h=\pi/1280$, and \Cref{fig:pbc_2} presents the $\ell^\infty$-norm errors with decreasing step $h$ for fixed $L$.
}
\label{fig:pbc}
\end{figure}
\Cref{app:PBC} presents the corresponding numerical discretization. 
However, the PBC error in solving QPEs  \eqref{eq:intro_qpe_model}  does not decay uniformly as the size of the  computational domain increases since it includes the  approximation  of  irrational  numbers  by  rational  numbers  during  the  modeling process. 
This error is known as the Diophantine error, which is  defined as
$$e_L=|L\beta-[ L\beta ]|, \qquad L\in\mathbb{Z}^+,$$
where $\beta$ is an irrational number and $[\cdot]$ denotes the nearest integer. 
For example,  consider applying the PBC in solving the QPE \eqref{eq:intro_qpe_model} with $\alpha(x) = \cos x+\cos \sqrt{2}x+6 $, $x\in [0,L\pi]$. As shown in \Cref{fig:pbc_1}, it can be observed that, as the computational domain size increases, the Diophantine error does not decrease monotonically. 
As $L$ increases, the error decreases consistently only when $L$ is chosen from the best-approximation sequence of $\beta$, as highlighted by the red points.
In \Cref{fig:pbc_2}, we fix $L$ and refine the mesh by decreasing $h$. The results show that, for each fixed \(L\), the PBC error rapidly reaches a plateau, beyond which further mesh refinement yields no improvement in accuracy. Moreover, as \(L\) increases, the PBC error decreases consistently only when \(L\) is chosen from the best-approximation sequence.
 These results indicate that the error of PBC in solving QPEs \eqref{eq:intro_qpe_model} is dominated by the Diophantine error and exhibits an extremely slow convergence rate, making it difficult to achieve acceptable accuracy.

Recently, the projection method (PM) \cite{Jiang2014Numericalpm,Jiang2018Numerical} has emerged as an accurate and efficient approach for solving quasiperiodic systems  with relatively high regularity on the whole space. By embedding a low-dimensional quasiperiodic problem into a higher-dimensional periodic problem, the PM enables numerical computation in the periodic parent space. It has been successfully applied to quasicrystal computations, quasiperiodic homogenization, incommensurate quantum systems, and grain boundary
\cite{Jiang2014Numericalpm,Jiang2018Numerical,Jiang2025Convergence,jiang2025projection,Jiang2024Numerical,Jiang2024IWFPM,Jiang2024tilt}. 
However, many quasiperiodic systems arising in physical applications are often modeled and computed on finite domains. Commonly used boundary conditions, such as PBCs, are generally affected by Diophantine errors. 
Therefore, suitable boundary conditions are essential for preserving the
quasiperiodic structure in finite-size computations.


In this paper, we propose quasiperiodic boundary conditions (QBCs) for finite-size QPEs by exploiting the homomorphic relationship between the physical space and the irrational manifold. The proposed QBCs preserve the quasiperiodic structure of the whole-space problem and capture the long-range order at the computational boundaries. For the numerical realization of QBCs, we employ the finite point recovery (FPR) method \cite{jiang2024accurately}. Through local interpolation of a finite set of interior samples, FPR recovers the required boundary values directly in the physical space, thereby avoiding dimensional lifting in numerical computation. Moreover, higher-order FPR interpolation further reduces the boundary approximation error.
Compared with  PBCs, QBCs significantly reduce the impact of traditional Diophantine error and exhibit enhanced convergence. Moreover, local basis functions can discretize both the interior equation
and the QBCs, making the framework particularly suitable for
low-regularity cases. For arbitrary \(p\geq2,~q\geq 1\), we discretize the diffusion term and the time
derivative using the \(p\)-th order central difference formula (CDF-\(p\))
and the \(q\)-th order backward differentiation formula (BDF-\(q\)),
respectively. We then establish an error analysis for the resulting fully
discrete scheme. Finally, extensive numerical experiments are conducted for both high- and low-regularity finite-size QPEs. 
The results not only validate the theoretical analysis but also demonstrate that the QBCs achieve high accuracy and improved convergence behavior compared to  PBCs, while exhibiting enhanced computational efficiency in low-regularity cases relative to the PM.

The article is organized as follows. \Cref{sec:preliminaries} presents a brief overview of the quasiperiodic functions and the FPR method.   \Cref{sec:QBC} introduces the concept of the QBCs and gives their numerical implementation. 
Moreover, we discretize \eqref{eq:intro_qpe_model} using CDF-$p$ schemes in space and BDF-$q$ schemes in time, and derive the resulting linear systems.
\Cref{sec:anal} presents the error analysis for the fully discrete formulation. 
\Cref{sec:num} presents numerical experiments using the second- and
fourth-order CDF and BDF schemes for spatial and temporal discretizations,
respectively, to verify the convergence analysis. The results for both
high- and low-regularity cases demonstrate the accuracy of the proposed
QBCs for the finite-size QPE \eqref{eq:intro_qpe_model}.
Finally, \Cref{sec:conc} summarizes our findings and presents future perspectives.

\section{Preliminaries}\label{sec:preliminaries} 
This section provides the preliminaries on quasiperiodic functions and the FPR method, laying the  foundation for the subsequent analysis and numerical solution of \eqref{eq:intro_qpe_model}.
\subsection{Quasiperiodic functions} 
We first give the definition of quasiperiodic functions and some of their necessary properties.

\begin{definition}
A matrix $\bm P\in\mathbb R^{d\times n}$ is called a projection matrix if
\[
\bm P\in\mathbb P^{d\times n}
:=
\left\{
\bm P=(\bm p_1,\ldots,\bm p_n)\in\mathbb R^{d\times n}:
\operatorname{rank}_{\mathbb Q}\bm P=n
\right\}.
\]
\end{definition}
\begin{definition}\label{def:QP_fun}
    A continuous function $f(\bm{x}) : \mathbb{R}^d \mapsto \mathbb{R}$ is quasiperiodic 
    if there exist  an $n$-dimensional periodic function $F\in C(\mathbb{T}^n)$,  and a projection matrix  $\bm{P}\in \mathbb{P}^{d\times n}$  such that $$f(\bm{x})=F(\Phi_{\bm{x}}^{\bm{P}}),\qquad \Phi_{\bm{x}}^{\bm{P}} \coloneqq (\bm{P}^T\bm{x})/\mathbb{Z}^n \in \mathbb{T}^n.$$ 
    Here, $F$ is called the parent function of $f$, 
$\Phi_{\bm{x}}^{\bm P}$ is the corresponding phase on $\mathbb{T}^n$.
\end{definition}



 Then, we have the following property of quasiperiodic functions.
\begin{lemma}[{\cite[Theorem~1.1]{Fan2025representation}}]
\label{lem:density}
Let $\bm P\in\mathbb P^{d\times n}$. Consider the dynamical system
$
    \left(
        \mathbb T^n,
        (\Phi_{\bm x}^{\bm P})_{\bm x\in\mathbb R^d}
    \right)
$
associated with the projection matrix $\bm P$.
Then the system is uniquely ergodic and minimal. Consequently,
the irrational manifold $\{\Phi_{\bm x}^{\bm P}:\bm x\in\mathbb{R}^d\}$ is dense in
$\mathbb T^n$, i.e.,
\[
    \overline{
        \left\{
            \Phi_{\bm x}^{\bm P}:
            \bm x\in\mathbb R^d
        \right\}
    }
    =
    \mathbb T^n.
\]
\end{lemma}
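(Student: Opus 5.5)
The plan is the classical Weyl-criterion argument for a linear flow on the torus. The action is $\bm x\cdot\theta=\theta+\bm P^T\bm x \pmod{\mathbb Z^n}$, an $\mathbb R^d$-action by translations along the subgroup $V=\bm P^T\mathbb R^d\subset\mathbb R^n$ projected to $\mathbb T^n$. We will show that Lebesgue (Haar) measure $\mu$ on $\mathbb T^n$ is the only invariant probability measure, and that every orbit is dense. Density of the irrational manifold $\{\Phi^{\bm P}_{\bm x}\}$ is then just density of the orbit of $0$.

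\textbf{Step 1 (arithmetic input).} Since $\operatorname{rank}_{\mathbb Q}\bm P=n$, the columns $\bm p_1,\dots,\bm p_n$ are $\mathbb Q$-linearly independent. Hence for every $\bm k\in\mathbb Z^n\setminus\{0\}$ we have $\bm P\bm k=\sum_j k_j\bm p_j\neq 0$. Equivalently, the character $e_{\bm k}(\theta)=e^{2\pi i\bm k\cdot\theta}$ satisfies
\[
e_{\bm k}(\Phi^{\bm P}_{\bm x})=e^{2\pi i (\bm P\bm k)\cdot\bm x},
\]
which is a nonconstant function of $\bm x$. This is the only place the hypothesis enters, and it is the step I expect to carry the whole proof.

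\textbf{Step 2 (unique ergodicity).} Let $\nu$ be an invariant Borel probability measure and compute its Fourier coefficients:
\[
\hat\nu(\bm k)=\int e_{-\bm k}\,d\nu .
\]
Invariance under translation by $\bm P^T\bm x$ gives $\hat\nu(\bm k)=e^{-2\pi i(\bm P\bm k)\cdot\bm x}\,\hat\nu(\bm k)$ for all $\bm x$. By Step 1 the factor differs from $1$ for some $\bm x$ whenever $\bm k\ne0$, so $\hat\nu(\bm k)=0$ for all $\bm k\neq 0$. A measure is determined by its Fourier coefficients (trigonometric polynomials are dense in $C(\mathbb T^n)$ by Stone--Weierstrass), so $\nu=\mu$. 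Invariant measures exist by Markov--Kakutani, since $\mathbb R^d$ is abelian.

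As a quantitative complement, for any trigonometric polynomial $F$ we have
\[
\frac{1}{|B_R|}\int_{B_R}F(\theta+\bm P^T\bm x)\,d\bm x\to\hat F(0),
\]
uniformly in $\theta$. This holds because $\frac{1}{|B_R|}\int_{B_R}e^{2\pi i\bm\xi\cdot\bm x}\,d\bm x\to0$ for $\bm\xi\ne0$, and by density the limit extends to all $F\in C(\mathbb T^n)$.

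\textbf{Step 3 (minimality and density).} Suppose some orbit closure $K$ is a proper closed set. Pick $\theta_0$ on the orbit and a nonnegative continuous $F$ that vanishes on $K$ but has $\int F\,d\mu>0$; such an $F$ exists because Haar measure charges every nonempty open set. Then the orbit averages of $F$ from $\theta_0$ are identically $0$, contradicting the uniform convergence in Step 2 to $\int F\,d\mu>0$.

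So every orbit is dense. In particular, taking $\theta=0$ gives
\[
\overline{\{\Phi^{\bm P}_{\bm x}:\bm x\in\mathbb R^d\}}=\mathbb T^n .
\]
Alternatively, one can argue directly: the closure of $V+\mathbb Z^n$ is a closed subgroup, and if it were proper its annihilator would contain some $\bm k\ne0$ with $\bm k\cdot V=0$, i.e.\ $\bm P\bm k=0$, contradicting Step 1.
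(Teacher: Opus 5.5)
The paper gives no proof of this lemma. It is imported verbatim as Theorem~1.1 of the cited work of Fan et al., so there is no internal argument to compare yours against. Your proof is correct and self-contained.

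\textbf{The key step.} It is exactly the one you isolate. $\operatorname{rank}_{\mathbb Q}\bm P=n$ means $\bm P\bm k\neq 0$ for every $\bm k\in\mathbb Z^n\setminus\{0\}$, so each nontrivial character restricted to the orbit of $0$ is the nonconstant exponential $e^{2\pi i(\bm P\bm k)\cdot\bm x}$. From there:
\begin{itemize}
\item choosing $\bm x$ with $(\bm P\bm k)\cdot\bm x=\tfrac12$ forces $\hat\nu(\bm k)=0$;
\item the decay of $|B_R|^{-1}\int_{B_R}e^{2\pi i\bm\xi\cdot\bm x}\,d\bm x$ follows from Riemann--Lebesgue after rescaling to $B_1$ (or from a product of sinc factors if you use cubes), and it gives the uniform ball averages;
\item your bump-function argument turns those averages into minimality.
\end{itemize}

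\textbf{Two cosmetic remarks.} First, Markov--Kakutani is superfluous, since Haar measure is visibly translation invariant. Second, in the closing annihilator argument you should say why $\bm k\cdot V\subset\mathbb Z$ forces $\bm k\cdot V=0$: $V$ is a linear subspace.

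\textbf{What your route buys over the citation.}
\begin{itemize}
\item It shows exactly where the $\mathbb Q$-rank hypothesis enters. It also shows the hypothesis is sharp: if $\bm P\bm k=0$ for some $\bm k\neq0$, then $e_{\bm k}$ is invariant and the orbit of $0$ is confined to the proper closed subgroup $\{\theta:\bm k\cdot\theta\in\mathbb Z\}$.
\item It identifies the unique invariant measure as Haar measure.
\item It yields uniform-in-$\theta$ convergence of ball averages to the Haar mean. This is equidistribution of the phases $\{\Phi^{\bm P}_{\bm x}:\bm x\in B_R\}$, a quantitatively stronger statement than density.
\end{itemize}

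The citation keeps the paper short and places the lemma inside the broader framework of Fan et al. Your annihilator argument is the cheapest route if only the density conclusion is needed, but on its own it gives neither unique ergodicity nor minimality.
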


\subsection{FPR method}Recently, 
the study in \cite{jiang2024accurately} proposes the FPR method for both high- and low-regularity quasiperiodic systems.
 The main idea is to select nearby interpolation nodes in the high-dimensional torus  and use their function values to recover the target value. 
This idea is implemented through the following interpolation procedure.
\begin{enumerate}
\item Choose a sufficiently large domain $\Omega$ and a physical grid
$\Omega_N\subset\Omega$ such that the mapped phases
$\{\Phi_{\bm x_i}^{\bm P}:\bm x_i\in\Omega_N\}$ sufficiently sample $\mathbb{T}^n$.

\item For a target point $\bm x^*\in\mathbb R^d$, compute its phase
$\bm\theta^*=\Phi_{\bm x^*}^{\bm P}$ and select $K$ nearby mapped phases,
denoted by
$
    \Theta_K
    =
    \left\{
    \Phi_{\bm {x}_{i_j}}^{\bm P}
    \right\}_{j=1}^{K}.
$
	
	\item Construct the local interpolation basis functions
	$\{\phi_j\}_{j=1}^{K}$ using the phases in $\Theta_K$.
	
   \item
   Recover the function value at $\bm{x}^{*}$ by
    \begin{equation*}
        u(\bm{x}^{*})
        =
        U\left(\Phi_{\bm{x}^{*}}^{\bm P}\right)
        \approx
        \mathcal{T}_{\mathrm{FPR}(k)}u(\bm{x}^{*})
        :=
        \sum_{j=1}^{K}
        u\left(\bm{x}_{i_j}\right)
        \phi_j\left(\Phi_{\bm{x}^{*}}^{\bm P}\right).
    \end{equation*}


\end{enumerate}


When $k$-degree  Lagrange interpolation is employed, the corresponding FPR interpolation operator is denoted by $\mathcal{T}_{\mathrm{FPR}(k)}$. The following lemma provides its error estimate. 
\begin{lemma}[{\cite[Theorem~4.4]{jiang2024accurately}}]
\label{lem:FPR_error}
Let $u$ be a quasiperiodic function whose parent 
function satisfies $U\in \mathcal{H}^{k+1}(\mathbb{T}^n)$. For any FPR-$k$
interpolation element $\Theta\subset\mathbb{T}^n$ with diameter 
$
    h_{\Theta}
    =
    \operatorname{diam}(\Theta),
$
and $0\leq m\leq k+1$, we have
\begin{equation*}
    \left\|
        u-\mathcal{T}_{\mathrm{FPR}(k)}u
    \right\|_{m}
    \leq
    C h_{\Theta}^{k+1-m}
    \left|U\right|_{k+1,\Theta}.
\end{equation*}
\end{lemma}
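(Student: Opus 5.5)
This lemma is quoted from \cite[Theorem~4.4]{jiang2024accurately}, so here we only sketch how the estimate would be derived. The idea is to move the whole question onto the torus. Since $u(\bm x)=U(\Phi_{\bm x}^{\bm P})$, the FPR operator $\mathcal{T}_{\mathrm{FPR}(k)}$ evaluated at a target $\bm x^*$ equals the value at $\bm\theta^*=\Phi_{\bm x^*}^{\bm P}$ of the degree-$k$ Lagrange interpolant $\Pi_\Theta U$ of $U$. This interpolant is built on the element $\Theta\subset\mathbb T^n$ spanned by the selected phases $\Theta_K$. Hence
\[
u(\bm x^*)-\mathcal{T}_{\mathrm{FPR}(k)}u(\bm x^*) = (U-\Pi_\Theta U)(\Phi_{\bm x^*}^{\bm P}),
\]
and the estimate reduces to a standard local polynomial interpolation estimate for $U$ on $\Theta$. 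The norm $\|\cdot\|_m$ should be read on the element, i.e.\ as the $\mathcal H^m(\Theta)$ norm of $U-\Pi_\Theta U$, which controls the physical-space error through the chain rule $\nabla_{\bm x}=\bm P\nabla_{\bm\theta}$.

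The key steps, in order, are as follows.
\begin{enumerate}
\item Map $\Theta$ affinely to a reference simplex or cube $\hat\Theta$ via $F_\Theta(\hat{\bm\theta})=B\hat{\bm\theta}+\bm b$, with $\|B\|\lesssim h_\Theta$ and $\|B^{-1}\|\lesssim\rho_\Theta^{-1}$.
\item On $\hat\Theta$, note that $\hat U-\hat\Pi\hat U$ annihilates $\mathbb P_k$. Since $k+1>n/2$ is needed for point evaluations, assume this, or else use averaged Taylor polynomials. The Bramble--Hilbert lemma then gives $\|\hat U-\hat\Pi\hat U\|_{m,\hat\Theta}\le C|\hat U|_{k+1,\hat\Theta}$.
\item Scale back. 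The seminorms transform as $|\hat v|_{j,\hat\Theta}\sim \|B\|^j|\det B|^{-1/2}|v|_{j,\Theta}$, which yields $C h_\Theta^{k+1}\rho_\Theta^{-m}|U|_{k+1,\Theta}$.
\item Finally, bound $\|\cdot\|_m$ on the physical side by the torus quantity, using that the $\bm P$-derivatives are bounded by $\|\bm P\|^m$ times torus derivatives.
\end{enumerate}

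The main obstacle is step 3's shape-regularity. The phases $\Phi_{\bm x_i}^{\bm P}$ are not a structured grid; they are an irregular sample of $\mathbb T^n$, dense by \Cref{lem:density}. We therefore need $h_\Theta/\rho_\Theta\le\sigma$ uniformly, and the selected nodes must be unisolvent for $\mathbb P_k$.

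To handle this, use the unique ergodicity and equidistribution in \Cref{lem:density}. For $\Omega$ large enough, every ball of radius $h$ in $\mathbb T^n$ contains sample phases. This allows the $K$ nodes to be chosen as small perturbations of a fixed unisolvent reference configuration scaled by $h_\Theta$. Unisolvence is an open condition, so the Lagrange basis stays uniformly bounded and $\rho_\Theta\gtrsim h_\Theta$. The constant $C$ then depends only on $k$, $n$, $m$, $\bm P$ and $\sigma$, which gives the stated $Ch_\Theta^{k+1-m}|U|_{k+1,\Theta}$.
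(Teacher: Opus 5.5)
The paper gives no proof of this lemma, only the citation. The statement is the classical local Lagrange-interpolation estimate transplanted to $\mathbb T^n$. Your steps~1--3 are the standard proof of it: write $\mathcal T_{\mathrm{FPR}(k)}u(\bm x^*)=(\Pi_\Theta U)(\Phi^{\bm P}_{\bm x^*})$, apply Bramble--Hilbert on a reference element, and scale back affinely. These steps establish the inequality with $\|\cdot\|_m$ read as $\|\cdot\|_{m,\Theta}$. This requires unisolvence, a bound $h_\Theta/\rho_\Theta\le\sigma$ and $k+1>n/2$ to be part of what an FPR-$k$ element is, and then $C$ depends on $k$, $n$, $m$ and these regularity constants.

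\textbf{Step~4 fails.} So does your remark that the $\mathcal H^m(\Theta)$ norm \emph{controls the physical-space error through the chain rule}. Set $e=U-\Pi_\Theta U$. The physical error $\bm x\mapsto e(\Phi^{\bm P}_{\bm x})$ involves $e$ only on $\Theta\cap\{\Phi^{\bm P}_{\bm x}:\bm x\in\Omega\}$. That set is a finite union of $d$-dimensional affine pieces, hence a Lebesgue null set of $\mathbb T^n$ when $d<n$.
\begin{itemize}
\item The identity $\nabla_{\bm x}=\bm P\nabla_{\bm\theta}$ is pointwise, while an $L^2$-based bound on $\Theta$ says nothing about $e$ on a null set.
\item Any trace or embedding inequality used to pass to that set loses powers of $h_\Theta$: a factor $h_\Theta^{-(n-d)/2}$ for $L^2$ on a $d$-dimensional piece, and $h_\Theta^{-n/2}$ for point values.
\item This loss is sharp, as rescaling a fixed function on the reference element shows. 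So the rate $h_\Theta^{k+1-m}$ with $|U|_{k+1,\Theta}$ does not transfer to physical space.
\end{itemize}
You have two options. Keep the lemma as a statement on $\Theta$ and drop step~4. Or, for the pointwise boundary values the paper actually uses ($\|\bm b_h\|_\infty=\mathcal O(h^{k+1})$), run step~3 in $L^\infty$, giving $\|e\|_{L^\infty(\Theta)}\le C h_\Theta^{k+1}|U|_{W^{k+1,\infty}(\Theta)}$; this bound does restrict to the slice.

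\textbf{Two smaller points.}
\begin{itemize}
\item Your fallback \emph{or else use averaged Taylor polynomials} does not cover $k+1\le n/2$. Averaged Taylor polynomials prove Bramble--Hilbert, but nodal interpolation still needs point evaluation to be bounded on $\mathcal H^{k+1}(\hat\Theta)$. Without that the estimate is false. A bump of height one and width $\varepsilon$ at a node has $|\cdot|_{k+1}=\mathcal O(\varepsilon^{n/2-k-1})\to 0$ for $n>2(k+1)$, while its interpolant is a fixed Lagrange basis function. So $k+1>n/2$ is a genuine hypothesis. It is harmless here, since $n\le 3$ in all the paper's examples.
\item Your existence argument for good elements works only at a coarser scale. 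For $\Omega$ large, \Cref{lem:density} together with $|\Phi^{\bm P}_{\bm x}-\Phi^{\bm P}_{\bm y}|\le\|\bm P\|\,|\bm x-\bm y|$ makes the grid phases $\mathcal O(h)$-dense, not dense. For example, with $\alpha(x)=6+\cos x+\cos\sqrt2\,x$ and $h=\pi/J$, $J\in\mathbb Z^+$, they all lie on $2J$ circles of $\mathbb T^2$. Hence \emph{small perturbations of a fixed reference configuration} requires $h_\Theta$ to be a sufficiently large fixed multiple of $h$. That still yields $\mathcal O(h^{k+1-m})$. In any case this is a hypothesis on $\Theta$, not something the lemma has to prove.
\end{itemize}
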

\section{Computational framework for finite-size QPEs}\label{sec:QBC}
In this section, we provide a computational modeling for finite-size QPEs. Specifically, we introduce QBCs for \eqref{eq:intro_qpe_model} and implement them using the FPR method.


\subsection{Quasiperiodic boundary conditions} 
The basic idea of QBCs is to use the mapping $\Phi_{\bm x}^{\bm P}$
to associate boundary points in the physical domain with their corresponding
phases on the high-dimensional torus, and then perform the quasiperiodic boundary recovery by the FPR method. 
Consider a simple example illustrated in \Cref{fig:omgeatour}.
A line segment represents the 1D physical domain, and the red points mark
the boundary points. 
For each $x\in\Omega$, $\Phi_x^{\bm P}$ gives the corresponding phase
on the two-dimensional torus $\mathbb T^2$. The red phases correspond
to the boundary points of $\Omega$.

\begin{figure}[htbp]
\centering
\begin{minipage}[t]{0.38\linewidth}
    \centering
    \includegraphics[width=\linewidth]{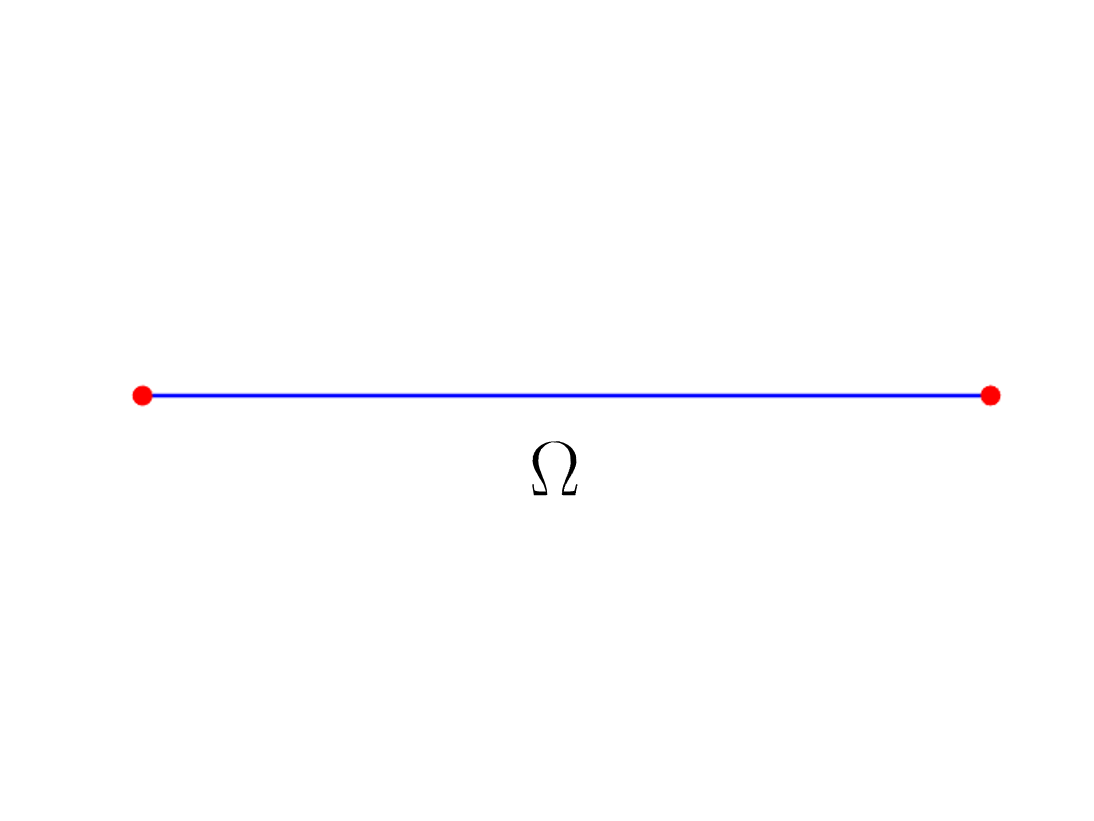}
    \caption*{(a)}
\end{minipage}
\hfill
\begin{minipage}[t]{0.46\linewidth}
    \centering
    \includegraphics[width=\linewidth]{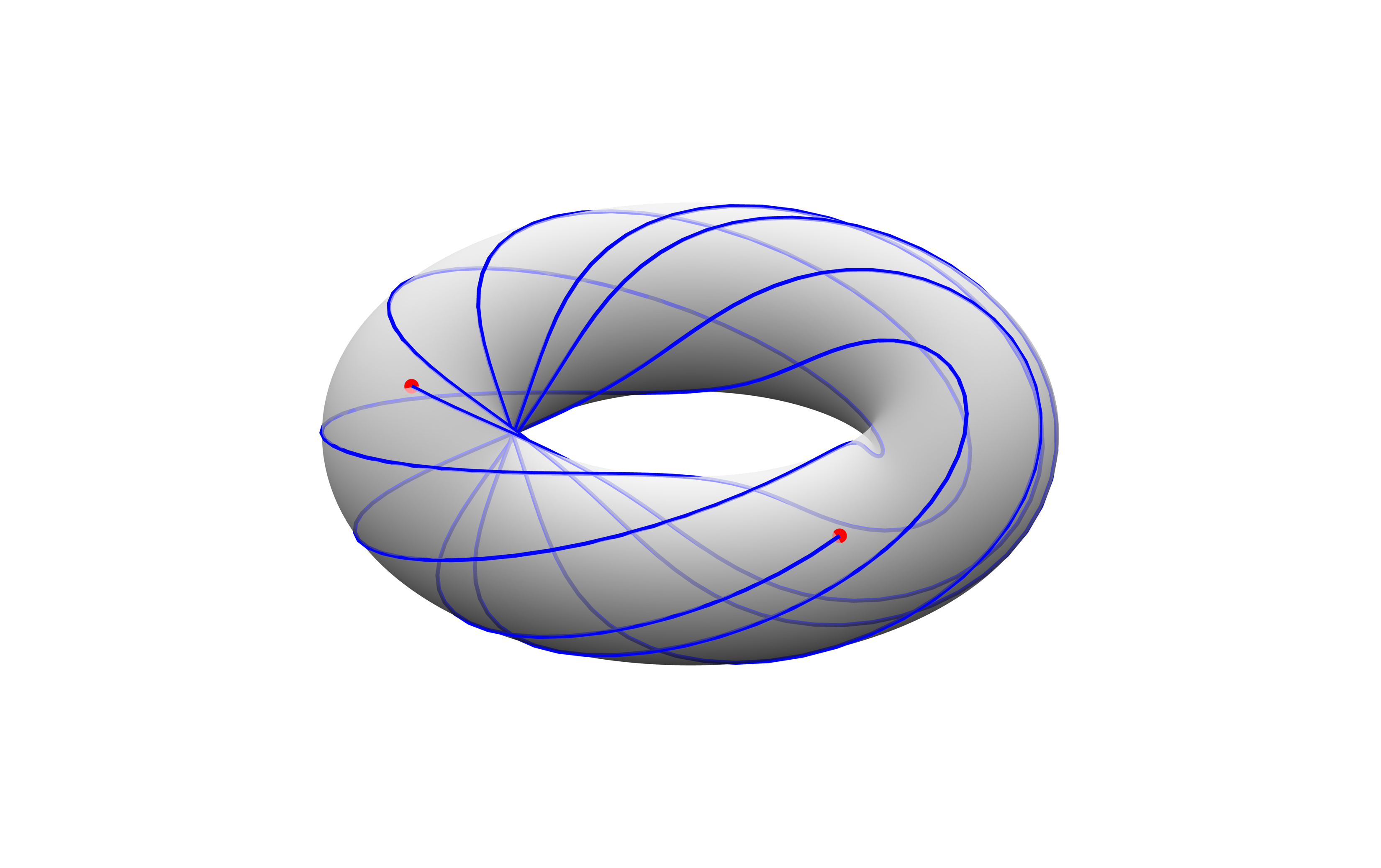}
    \caption*{(b)}
\end{minipage}
\caption{
Mapping of a 1D physical domain to a 2D irrational manifold,
(a) the physical domain $\Omega$;
(b) the corresponding irrational manifold
$\{\Phi_x^{\bm P}:x\in\Omega\}$ on $\mathbb{T}^2$.
}
\label{fig:omgeatour}
\end{figure}


Based on this idea, we introduce three types of QBCs below. The first kind of QBC (QBC-I) is a Dirichlet-type condition, which defines the value of the function on the boundary. It is expressed as
\begin{equation*}\label{eq:QBC_def}
    u(\bm{x}) = U\left(\Phi_{\bm x}^{\bm P}\right), ~~~\bm{x} \in \partial \Omega.
\end{equation*}
The QBCs also admit a natural physical interpretation. To illustrate this, consider a quasiperiodic heat-conduction problem.  QBC-I determines the boundary temperature by restricting the full-space quasiperiodic field to the computational boundary, rather than imposing an independent boundary value.


The second kind of QBC (QBC-II) is a Neumann-type condition, which specifies the normal derivative of the function on the boundary. It can be written as
	\begin{equation*}
		\partial_{\bm n}u(\bm{x})
		=
		\bm n\cdot
		\bm P\nabla_{\bm y}U
		\left(\Phi_{\bm{x}}^{\bm{P}}\right),
		\qquad
		\bm{x}\in\partial\Omega,
		\label{eq:QBC-II}
	\end{equation*}
where $\bm{n}$ denotes the outward unit normal on $\partial\Omega$, and $$ \bm{P}\cdot\nabla_{\bm y}=\sum_{i=1}^n\bm{p}_i\frac{\partial}{\partial y_i}, ~~\bm{y} = (y_1\cdots,y_n)\in \mathbb{T}^n.$$ 
For QBC-II, the computational boundary retains the normal heat-transfer
behavior of the full-space quasiperiodic medium. The boundary heat flux
is determined by the directional derivative of the higher-dimensional
periodic function $U$ along the irrational direction associated with
$\bm{P}^{T}$.

The third kind of QBC (QBC-III) is a Robin-type condition, which is a more general type of boundary condition that specifies a linear combination of the unknown function and its normal derivative  on the boundary.
Specifically, given constants $\omega_1,~\omega_2>0$, it is defined by
  \begin{equation*}
         	\displaystyle \omega_1 u(\bm x) + \omega_2 \partial_{ \bm{n}} u(\bm x)
         	= (\omega_1 U + \omega_2	\bm n\cdot
		\bm P\nabla_{\bm y} U)(\Phi_{\bm x}^{\bm P}),~~~ \bm x \in \partial\Omega.
    \end{equation*}
For QBC-III, the computational boundary preserves the heat-exchange
relation between the boundary temperature and the normal heat flux
inherited from the full-space quasiperiodic medium, with this coupling
governed by the irrational manifold associated with the projection
matrix $\bm{P}$.

\subsection{Discretization} \label{sbusec:dis}
Next, the QPE \eqref{eq:intro_qpe_model} is discretized in space by a CDF-$p$
scheme  and in time by a BDF-$q$ scheme. 
For simplicity, we present the discretization for the 1D case and use the second-order scheme as a representative example.

\textbf{Spatial discretization.}
We define a mesh on the computational domain $\Omega=(a,b)$. Let the step size be $h=(b-a)/N$. 
Then the mesh grid is defined as
\begin{equation*}
    \Omega_{N}:=\{x_i:x_i=i h+a,~0\leq i\leq N\}.
\end{equation*}
We denote the numerical solution at $x_i$ as $u_{i}(t)\approx u(x_i,t)$. The diffusion term is then discretized as 
\begin{equation}
- \nabla\left( \alpha(x_i)\nabla u_i(t)\right) \approx- \frac{1}{h^2} \left[ \alpha_{i+\frac{1}{2}} (u_{i+1}(t) - u_i(t)) - \alpha_{i-\frac{1}{2}} (u_i(t) - u_{i-1}(t)) \right],
\end{equation}
where $\alpha_{i\pm \frac{1}{2}} = \alpha(x_{i}\pm \frac{h}{2})$.

After spatial discretization, the QPE \eqref{eq:intro_qpe_model} reduces to a system of ordinary differential equations.
In vector form, let $
\bm{u}_h(t) := [u_0(t), u_1(t), \dots, u_N(t)]^T,$ $\bm{f}_h(t) := [f_0(t),f_1(t),  \dots, f_N(t)]^T.$
Then the semi-discrete system can be written as
\begin{equation}\label{eq:semiode}
    \frac{d\bm{u}_h(t)}{dt} + A \bm{u}_h(t) = \bm{f}_h(t),
\end{equation}
where $A$ is the spatial discretization matrix, whose explicit structure will be presented in \cref{sec:matrixstructure}.

It is worth noting that the interior discretization is not restricted to the present scheme, and other spatial discretization methods may also be employed.

\textbf{Temporal discretization.} 
Let the termination time be $T$, and divide the interval $[0,T]$ uniformly into $M$ intervals with time step
$$
\tau = \frac{T}{M}, \qquad t_m = m\tau, \quad m=0,1,\dots,M.
$$
Denote the approximate solution and right-hand side at the $m$-th time level as
$$
\bm{u}_h^m \approx \bm{u}_h(t_m), \qquad \bm{f}_h^m = \bm{f}_h(t_m),
$$
where $\bm{u}_h^m$ represents the numerical solution at time $t_m$ with step size $\tau$.

The $q$-th step BDF method approximates the time derivative by
\begin{equation}\label{eq:bdfk_formula}
\frac{1}{\tau}\sum_{j=0}^{q} c_j \bm{u}_h^{m+1-j} \approx \frac{d\bm{u}_h(t)}{dt}(t^{m+1}),
\end{equation}
where $\{c_j\}_{j=0}^q$ are the weight coefficients of BDF-$q$ scheme. 

Substituting \eqref{eq:bdfk_formula} into the semi-discrete system \eqref{eq:semiode} yields the fully discrete BDF-$q$ scheme
\begin{equation}\label{eq:bdfk_general}
\frac{1}{\tau}\sum_{j=0}^{q}c_j\,\bm{u}_h^{\,m+1-j}
+ A\bm{u}_h^{\,m+1}
= \bm{f}_h^{\,m+1}.
\end{equation}

\textbf{Approximation of QBCs.}
Define the 1D boundary of the mesh grid $\Omega_N$ as
\begin{equation*}\label{eq:i_boundary}
    \partial\Omega_N := \left\{ x_i \in \Omega_N : x_i \in \{a,b\} \right\}.
\end{equation*}

Then we use the FPR method to discretize  the QBCs. 
As shown in \Cref{fig:omgeatourdis}, the red points denote the boundary
nodes $x_i\in\partial\Omega_N$ in the physical domain and the
corresponding phases $\Phi_{x_i}^{\bm P}$ on the irrational manifold
$\{\Phi_{x}^{\bm P}:x\in\Omega\}$, while the orange points denote the interpolation
phases in $\Theta_K$. 
\begin{figure}[htbp]
\centering
\begin{minipage}[t]{0.38\linewidth}
    \centering
    \includegraphics[width=\linewidth]{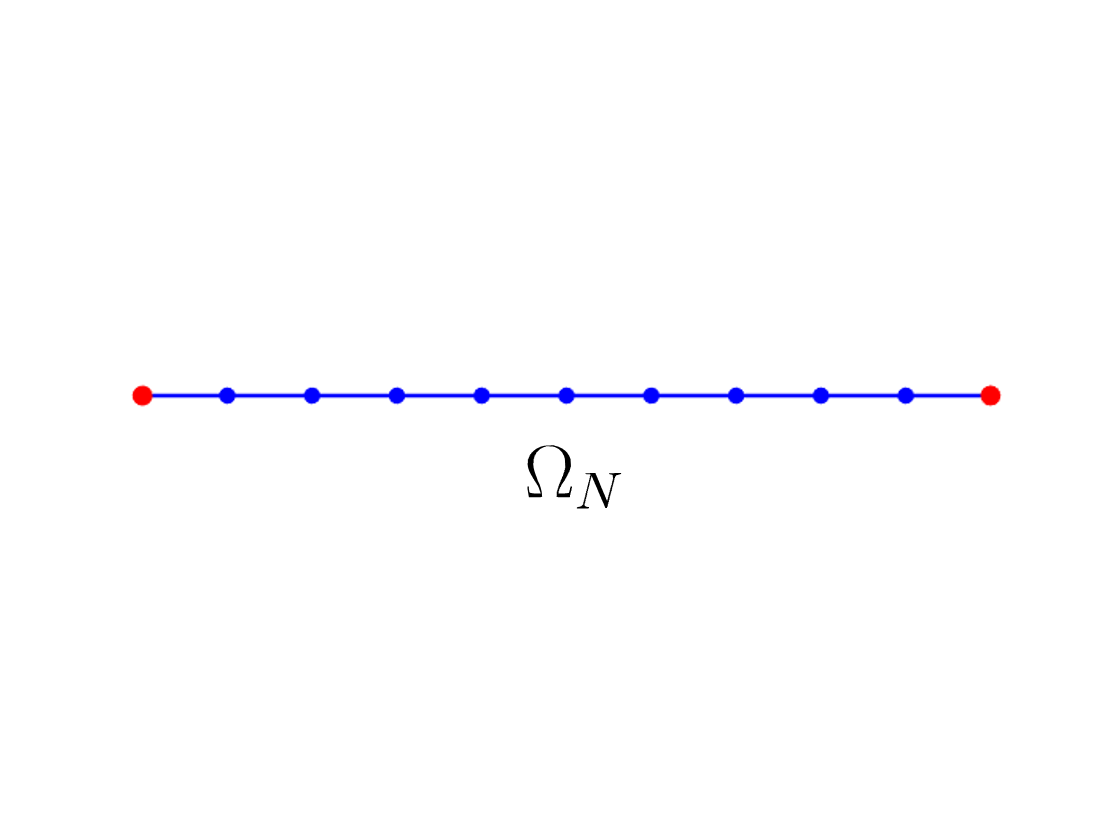}
    \caption*{(a)}
\end{minipage}
\hfill
\begin{minipage}[t]{0.48\linewidth}
    \centering
    \includegraphics[width=\linewidth]{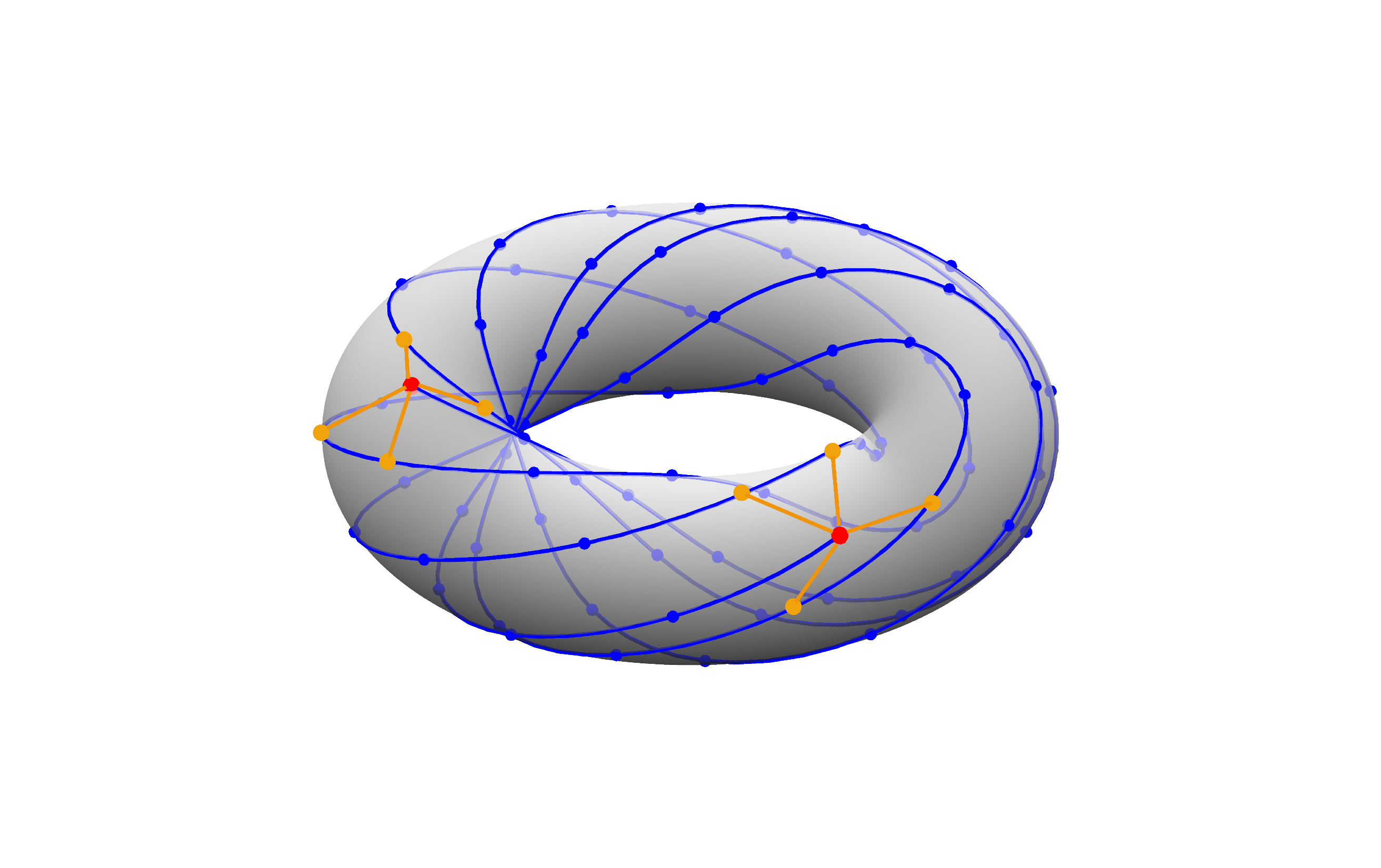}
    \caption*{(b)}
\end{minipage}
\caption{
Recovery of the boundary information for a 1D domain using the FPR method,
(a) the physical grid $\Omega_N$;
(b) the mapped phases and interpolation set $\Theta_K$.
}
\label{fig:omgeatourdis}
\end{figure}
We directly select the mesh grid $\Omega_N$ as the pre-interpolation nodes to interpolate all the boundary points $x \in\partial\Omega_N$.
For each $x_i \in \Omega_N \setminus \Theta_K$, we have $\phi_i(\Phi_{ x}^{\bm P}) = 0$. 
Then the above discrete QBC-I can be rewritten as
\begin{equation}\label{eq:discrete_QSEb_1D}
    u(x) =U(\Phi_{  x}^{\bm P})\approx \mathcal{T}_{\rm{ FPR}} u(x) = \sum_{x_i \in \Omega_N} u(x_i) \, \phi_i(\Phi_{ x}^{\bm P}), \qquad x \in \partial \Omega_N.
\end{equation}
We denote the QBCs constructed using the $k$-th order FPR interpolation as QBC-I(FPR$k$). 

For QBC-II,  the derivative at the boundary points is discretized using the FPR method,
 $$\frac{\partial u}{\partial x}\approx\mathcal{T}_{\mathrm{FPR}}\frac{\partial u}{\partial x}=\sum_{x_i\in\Omega_N}\frac{\partial u}{\partial x_i}\phi_i(\Phi_{  x}^{\bm P}), \quad x\in\partial\Omega.$$ 
To obtain the discrete form of QBC-II, we apply the central difference approximation uniformly to both boundary and interior grid points. For any grid point $x_i \in \Omega_N$, the derivative is approximated by  $$\frac{\partial u}{\partial x}\approx\frac{u(x+h)-u(x-h)}{2h}.$$

For QBC-III, combining QBC-I and QBC-II gives
\begin{equation*}
\begin{aligned}
\omega_1 u( x) + \omega_2 \frac{\partial u}{\partial x}\approx \omega_1\sum_{x_i \in \Omega_N} u(x_i)\phi_i(\Phi_{  x}^{\bm P})+\omega_2\sum_{x_i\in\Omega_N}\frac{\partial u}{\partial x_i}\phi_i(\Phi_{  x}^{\bm P}),\qquad x \in \partial \Omega_N. 
\end{aligned}
\end{equation*}

\subsection{Matrix structure}\label{sec:matrixstructure}
To facilitate numerical implementation and analysis, we consider the fully discrete linear system given in \eqref{eq:bdfk_general}, 
where the spatial discretization matrix $A$ is written as
\begin{equation}\label{eq:abd} 
A =- \frac{1}{h^2}(H + B),
\end{equation}
where $H, B \in \mathbb{R}^{(N+1)\times(N+1)}$ are specified  according to the imposed QBC. 
The matrix $H$ is obtained from the CDF-2  discretization of the diffusion term, while $B$ represents the boundary recovery matrix.
We next specify the matrices $H$ and $B$ for the proposed QBCs.

\textbf{QBC-I.}  The matrix $H^{I}$ takes the tridiagonal form
\[
\scalebox{0.85}{$
H^{I} =
\begin{bmatrix}
0 & 0 & 0 & \cdots & \cdots & 0 \\
\alpha_{1/2} & -(\alpha_{1/2} + \alpha_{3/2}) & \alpha_{3/2} & 0 & \cdots & 0 \\
0 & \alpha_{3/2} & -(\alpha_{3/2} + \alpha_{5/2}) & \alpha_{5/2} & \cdots & 0  \\
\vdots & \ddots & \ddots & \ddots &\ddots & 0 \\
0 & \cdots & 0 & \alpha_{N-3/2} & -(\alpha_{N-3/2} + \alpha_{N-1/2}) & \alpha_{N-1/2} \\
0 & 0 & \cdots & \cdots & 0 & 0
\end{bmatrix}.
$}
\]
The corresponding boundary matrix $B^{I}$ is 
\begin{equation*}
    B^{I} =
    \begin{bmatrix}
        1&-\phi_{0,1}  & \cdots & -\phi_{0,N-1}&0 \\
        0 & \cdots&\cdots & \cdots&0 \\
        \vdots & \ddots & \vdots&\vdots&\vdots \\
        0&-\phi_{N,1}  & \cdots & -\phi_{N,N-1}&1
    \end{bmatrix}.
\end{equation*}

\textbf{QBC-II.}  Under QBC-II, the matrix $H^{II}$ is given by
\[
\scalebox{0.78}{$
\begingroup
\setlength{\arraycolsep}{2.5pt}
H^{II} =
\begin{bmatrix}
- (\alpha_{-1/2}+\alpha_{1/2}) & \alpha_{-1/2}+\alpha_{1/2}& 0 & \cdots &\cdots& 0 \\
\alpha_{1/2} & -(\alpha_{1/2} + \alpha_{3/2}) & \alpha_{3/2} & 0 & \cdots & 0 \\
\vdots & \ddots & \ddots & \ddots & \ddots & 0 \\
0 & \cdots & 0 & \alpha_{N-3/2} & -(\alpha_{N-3/2} + \alpha_{N-1/2}) & \alpha_{N-1/2} \\
0 & 0 & \cdots & 0 &\alpha_{N-1/2}+\alpha_{N+1/2} & -(\alpha_{N-1/2}+\alpha_{N+1/2})
\end{bmatrix}.
\endgroup
$}
\]
The boundary recovery matrix under QBC-II is written as
\[
B^{II} = B^{II}_1 + B^{II}_2,
\]
where $B^{II}_1$ and $B^{II}_2$ are defined as follows
\begin{equation*}
   B^{II}_1=
    \begin{bmatrix}
    \alpha_{-1/2}\phi_{0,1}&\cdots&\alpha_{-1/2}\phi_{0,N-1}&0&0\\
    0&0&\cdots&0&0\\
\vdots&\vdots&\ddots&\vdots&\vdots\\
      -\alpha_{N+1/2}\phi_{N,1}&\cdots&-\alpha_{N+1/2}\phi_{N,N-1}&0&0\\
    \end{bmatrix},
\end{equation*}
\begin{equation*}
    B^{II}_2=
    \begin{bmatrix}
    0&0&-\alpha_{-1/2}\phi_{0,1}&\cdots&-\alpha_{-1/2}\phi_{0,N-1}\\
    0&0&\cdots&0&0\\
\vdots&\vdots&\ddots&\vdots&\vdots\\
     0&0& \alpha_{N+1/2}\phi_{N,1}&\cdots&\alpha_{N+1/2}\phi_{N,N-1}\\
    \end{bmatrix}.
\end{equation*}

\begin{remark}
Since QBC-III is a linear combination of QBC-I and QBC-II, its matrix follows from the corresponding linear combination of boundary matrices.
\end{remark}

\section{Error analysis}\label{sec:anal} In this section, we present a rigorous error analysis for combining CDF-$p$  and BDF-$q$ schemes  in solving the QPE \eqref{eq:intro_qpe_model} in QBC framework. Let
$$
\bm{u}^*(t_m) = [\,u(x_0, t_m), \dots, u(x_N, t_m)\,]^T, \quad  \quad 1\leq m\leq M
$$
denote the vector of exact solution values at the spatial grid points \(x_0, \dots, x_N\) at  \(t=t_m\), and let
$$
\bm{u}_h^m = [\,u_0^m, \dots, u_N^m\,]^T
$$
be the corresponding numerical solution. The error vector is defined by
$$
\bm{e}_h^\tau = \bm{u}^* - \bm{u}_h^M,
$$
and is measured in the  $\ell^\infty$-norm 
$$
\|\bm{e}_h^\tau\|_\infty = \max_{0\le i\le N} |e_i^\tau| = \max_{0\le i\le N} |u(x_i,T)-u_i^M|.
$$
The following theorem estimates the fully discrete error for QBC-I.
\begin{theorem}\label{th:QPEFD}
Consider the QPE \eqref{eq:intro_qpe_model} with QBC-I. Let $p\geq 2$, $q\geq 1$, and set $\sigma=\max\{p,k+1\}$, $S=\|A\|_\infty$.
Assume the coefficient $\alpha \in C^{p}(\Omega)$, the source term 
and the exact solution satisfy $f(\cdot,t), u(\cdot,t) \in C^\sigma(\Omega)$ for all $t \in [0,T]$. Then there exists a constant $C_{S,T}>0$ such that the fully discrete error satisfies
\[
\|\bm{u}^*(t_M) - \bm{u}_h^M\|_\infty \le C_{S,T} \big(h^{\min\{p, k+1\}} + \tau^q\big).
\]
\end{theorem}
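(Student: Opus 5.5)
The argument is a consistency-plus-stability one, carried out in the $\ell^\infty$ norm. The local truncation error of the fully discrete scheme \eqref{eq:bdfk_general}, together with the FPR boundary rows, will be shown to be $O(h^{\min\{p,k+1\}}+\tau^q)$. The error is then propagated through the BDF-$q$ recursion, with all amplification absorbed into a constant depending on $S=\|A\|_\infty$ and $T$.

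\textbf{Step 1 (consistency).} Substitute the exact solution $\bm u^*(t_m)$ into the scheme and define the residual
\[
\bm r^{m+1}=\frac1\tau\sum_{j=0}^q c_j\bm u^*(t_{m+1-j})+A\bm u^*(t_{m+1})-\bm f_h^{m+1}.
\]
The residual has three sources.
\begin{itemize}
\item At interior rows, Taylor expansion of the CDF-$p$ stencil (using $\alpha\in C^p$ and $u(\cdot,t)\in C^\sigma$) gives a spatial error of size $Ch^p$.
\item At interior rows, the BDF-$q$ formula gives a temporal error $C\tau^q\sup_t|\partial_t^{q+1}u|$, assuming tacitly enough time regularity of $u$.
\item At the two boundary rows, the row reads $u(x_b)-\sum_i\phi_{b,i}u(x_i)$, because the $h^{-2}$ scaling in \eqref{eq:abd} multiplies $B$.
\end{itemize}
For the boundary rows, \Cref{lem:FPR_error} with $m=0$ bounds $u-\mathcal T_{\mathrm{FPR}(k)}u$ by $Ch_\Theta^{k+1}|U|_{k+1,\Theta}$. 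I would then argue that $h_\Theta\lesssim h$, since the interpolation phases are chosen nearby on the torus. This gives a boundary defect $O(h^{k+1})$, and the total consistency error is $O(h^{\min\{p,k+1\}}+\tau^q)$.

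The boundary rows need care: the $h^{-2}$ scaling must not make the defect $O(h^{k-1})$. I would therefore treat the boundary equations unscaled, as algebraic constraints $e_b=\sum_i\phi_{b,i}e_i+O(h^{k+1})$, and keep them separate from the parabolic rows.

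\textbf{Step 2 (error equation and stability).} Let $\bm e^m=\bm u^*(t_m)-\bm u_h^m$. The errors satisfy
\[
\frac1\tau\sum_{j=0}^q c_j\bm e^{m+1-j}+A\bm e^{m+1}=\bm r^{m+1},
\]
with starting errors assumed to be of the same order. I would write this as a one-step system on the stacked vector $(\bm e^{m+1},\dots,\bm e^{m+2-q})$ with companion matrix $\mathcal G=\mathcal G(\tau A)$. Each step involves the inverse $(c_0I+\tau A)^{-1}$, and the plan is to bound it in $\|\cdot\|_\infty$ by $(c_0-\tau S)^{-1}\le C$ for $\tau S\le c_0/2$ via a Neumann series. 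The companion recursion then gives $\|\mathcal G\|\le 1+C_S\tau$ in a suitable norm; zero-stability of BDF-$q$ is used for the part with $A=0$. A discrete Grönwall argument over $M=T/\tau$ steps then yields $\|\bm e^M\|_\infty\le C e^{C_ST}\sum_m\tau\|\bm r^m\|_\infty$, which proves the theorem with $C_{S,T}$ depending exponentially on $S T$.

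\textbf{Main obstacle.} The delicate point is the stability step. The constant is allowed to depend on $S=\|A\|_\infty\sim h^{-2}$, which is why the statement writes $C_{S,T}$, but the boundary rows and the condition $\tau S\le c_0/2$ still have to be reconciled. My first approach is to exploit the structure of $A$: its interior rows form an M-matrix-like diffusion stencil. For QBC-I, the FPR weights satisfy $\sum_i\phi_{b,i}=1$, and I would like $\sum_i|\phi_{b,i}|$ to be bounded (Lebesgue constant). These two properties give a discrete maximum principle for $c_0I+\tau A$, bounding the inverse by $1/c_0$ uniformly in $h$. If that fails for higher $k$ or $p$, I would fall back to accepting the $S$-dependence explicitly through the Neumann-series bound, as the theorem permits.
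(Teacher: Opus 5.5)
Your route differs from the paper's. The paper never analyzes the fully discrete recursion. It passes through the semi-discrete solution, bounding $\|\bm u^*(t_M)-\bm u_h(t_M)\|_\infty$ by \Cref{th:QPESD} (consistency plus Gr\"onwall with constant $e^{\|A\|_\infty T}$). It bounds $\|\bm u_h(t_M)-\bm u_h^M\|_\infty$ by quoting standard BDF-$q$ estimates (\Cref{th:QPETD}, which adds the hypothesis $\bm u_h\in C^{q+1}([0,T])$). Your one-shot argument places the temporal truncation error on $\partial_t^{q+1}u$ rather than on $\bm u_h^{(q+1)}$, whose size the hypotheses do not control, and it makes the BDF stability explicit. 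The price is time regularity of $u$, which the statement does not assume. Both routes end with a constant exponential in $ST$, with $S\sim h^{-2}$.

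The genuine gap is that your Steps 1 and 2 analyze different systems. Step 1 treats the boundary rows as unscaled algebraic constraints $e_b=\sum_i\phi_{b,i}e_i+\delta_b$, where $\delta_b=(u-\mathcal T_{\mathrm{FPR}(k)}u)(x_b)$. Step 2 (companion matrix, $(c_0I+\tau A)^{-1}$, Neumann series) instead uses $\frac1\tau\sum_j c_j\bm e^{m+1-j}+A\bm e^{m+1}=\bm r^{m+1}$ in every row. In that system the boundary rows read $\dot u_b-h^{-2}(u_b-\sum_i\phi_{b,i}u_i)=f_b$, so the exact solution leaves the residual $\partial_t u(x_b)-f(x_b)-h^{-2}\delta_b+O(\tau^q)=(\alpha u_x)_x(x_b)+O(h^{k-1}+\tau^q)$, which is $O(1)$.

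Suppose you keep the algebraic rows instead. The per-step matrix is then $c_0E+\tau\tilde A$, with $E=\mathrm{diag}(0,1,\dots,1,0)$ and $\tilde A$ carrying the constraint rows. This is not a perturbation of $c_0I$, so the Neumann series is unavailable. You must eliminate $e_b$, which is possible since the FPR weights only involve interior nodes. Then $\delta_b$ re-enters rows $1$ and $N-1$ multiplied by $h^{-2}\alpha_{1/2}$ and $h^{-2}\alpha_{N-1/2}$, i.e. as an $O(h^{k-1})$ interior residual. Your Gr\"onwall bound then gives $h^{\min\{p,k-1\}}$, which is no rate at all for $k=1$. You recover $h^{\min\{p,k+1\}}$ only by absorbing $h^{-2}\le S/(4\gamma_1)$ into $C_{S,T}$. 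The statement allows this, but then the order is not produced by the argument.

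A genuine $h^{k+1}$ requires treating $\delta_b$ as perturbed boundary data under an $h$-uniform $\ell^\infty$ bound, and your maximum-principle plan does not supply one. Having $\sum_i\phi_{b,i}=1$ with a bounded Lebesgue constant is not enough; you need $\phi_{b,i}\ge 0$, which fails for Lagrange weights once $k\ge 2$. Moreover, in the literal $A$ the boundary rows have the reversed sign pattern: $-h^{-2}$ on the diagonal and $+h^{-2}\phi_{b,i}$ off it. So $c_0I+\tau A$ is never an M-matrix.

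Two further points. First, the step $h_\Theta\lesssim h$ is not justified. Here $h_\Theta$ is the diameter of the FPR element on $\mathbb T^n$. Refining $h$ on a fixed $\Omega$ only densifies the phases along the $d$-dimensional irrational manifold. A unisolvent degree-$k$ element needs nodes from several windings of that manifold, whose spacing is governed by the size of $\Omega$ and the Diophantine properties of $\bm P$, not by $h$. Indeed, in \Cref{tab:error_pbc_qbc_p24}, at fixed $h$ the QBC-I error is driven by $L$. The paper asserts the same $O(h^{k+1})$ without proof, so this weakness is shared, but it is exactly the step carrying the exponent $k+1$.

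Second, your condition $\tau S\le c_0/2$ is a restriction $\tau\lesssim h^2$ that is absent from the statement. Also, BDF-$q$ is zero-stable only for $q\le 6$, so the claim for $q\ge 7$ is out of reach of your argument, and of the paper's citation.
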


To prove the main result in \Cref{th:QPEFD}, we first establish an error estimate for the semi-discrete scheme in \Cref{th:QPESD}. For simplicity, the proof is presented for the CDF-2 discretization with QBC-I. The corresponding estimates for higher-order CDF-\(p\) schemes can be derived analogously.

\begin{theorem}\label{th:QPESD}
Let $\bm{u}_h(t)$ be the semi-discrete solution obtained with CDF-2 in space and QBC-I.
Under the regularity assumptions of \Cref{th:QPEFD}, there exists a constant $C_S>0$, such that for all $t \in [0,T]$,
\begin{equation}
\|\bm{u}^*(t) - \bm{u}_h(t)\|_\infty \le C_S h^{ \min \left\{{2,k+1}\right\}}.
\end{equation}

\begin{proof}
We define the discrete operator $\mathcal{F}_h$ by
\begin{equation*}
    (\mathcal{F}_h\bm{u}_h)(t)\coloneqq \frac{d{\bm{u}}_h}{dt}(t)+A\bm{u}_h(t).
\end{equation*}
The spatial discretization, based on the CDF-2 scheme with QBC-I, leads to the semi-discrete system \eqref{eq:semiode}.
To prove the convergence of the numerical solution $\bm{u}_h(t)$ to the exact solution $\bm{u}(t)$, we perform the analysis of consistency, stability, and convergence step by step. 

\textbf{Consistency.} 
The  CDF-2 approximation of the diffusion term satisfies
\begin{equation*}
-\frac{1}{h^2} \left( \alpha_{i+\frac{1}{2}}(u_{i+1}(t) - u_i(t)) - \alpha_{i-\frac{1}{2}}(u_i(t) - u_{i-1}(t)) \right) = -\frac{\partial}{\partial x} \left( \alpha\frac{\partial u}{\partial x} \right)(x_i,t) + \mathcal{O}(h^2).
\end{equation*}
Moreover, \Cref{lem:FPR_error} implies that  boundary recovery accuracy achieves $\mathcal{O}(h^{k+1})$.



Therefore, the truncation error is affected by both the interior discretization error and the boundary approximation error, leading to
$$
\mathcal{O}(h^{k+1}) \ \text{(boundary error)}
+
\mathcal{O}(h^2) \ \text{(interior error)}
=
\mathcal{O}\!\left(h^{\min\{2,k+1\}}\right).$$
Consequently, applying the discrete operator $\mathcal{F}_h$ to the exact solution $u^*$, we obtain
\begin{equation*}
(\mathcal{F}_h \bm{u}^*)(t) = \bm{f}_h(t) + \bm{e}_h(t),
\quad \|\bm{e}_h(t)\|_\infty = \mathcal{O}(h^{\min\{2,\,k+1\}}),
\end{equation*}
which implies that the scheme is $\min\{2,\,k+1\}$-order consistent.

\textbf{Stability.}
Consider the semi-discrete scheme
\begin{equation*}
\frac{d}{dt} \bm{u}_h(t) + A \bm{u}_h(t) = \bm{f}_h(t), \quad \bm{u}_h(0) = \bm{u}_h^0.
\end{equation*}
Integrating the semi-discrete equation over the time interval $[0, t]$ yields
\begin{equation}\label{interstab}
\bm{u}_h(t) + \int_0^t A \bm{u}_h(\eta)  d\eta = \bm{u}_h^0 + \int_0^t \bm{f}_h(\eta)  d\eta.
\end{equation}
Applying the triangle inequality to \eqref{interstab}, we obtain
\begin{equation*}
\|\bm{u}_h(t)\|_\infty \le \|\bm{u}_h^0\|_\infty + \int_0^t \|\bm{f}_h(\eta)\|_\infty  d\eta + \left\| \int_0^t A \bm{u}_h(\eta)  d\eta \right\|_\infty.
\end{equation*}
Estimating the norm of the integral term leads to
\begin{equation*}
\left\| \int_0^t A \bm{u}_h(\eta)  d\eta \right\|_\infty \le \int_0^t \|A\|_\infty \|\bm{u}_h(\eta)\|_\infty  d\eta.
\end{equation*}
Combining these results yields the key integral inequality
\begin{equation}\label{inteineq}
\|\bm{u}_h(t)\|_\infty \le \left( \|\bm{u}_h^0\|_\infty + \int_0^t \|\bm{f}_h(\eta)\|_\infty  d\eta \right) + \|A\|_\infty \int_0^t \|\bm{u}_h(\eta)\|_\infty  d\eta.
\end{equation}
By Gronwall's inequality \cite{Jiang2025Convergence}, we have
\begin{equation*}
\|\bm{u}_h(t)\|_\infty \le e^{\|A\|_\infty t} \left( \|\bm{u}_h^0\|_\infty + \int_0^t \|\bm{f}_h(\eta)\|_\infty  d\eta \right).
\end{equation*}
Since the discrete operator $A$ is bounded,  $\|A\|_\infty$ is bounded, and thus the constant $C = e^{\|A\|_\infty T}$ is finite, which concludes the proof.

\textbf{Convergence.} Consider the error 
\begin{equation*}
    \bm{e}_h(t)=\bm{u}^*(t)-\bm{u}_h(t).
\end{equation*}
The error equation can be written as
\begin{equation*}
    (\mathcal{F}_h \bm{e}_h)(t) =\mathcal{R}(t) + \bm{b}_h(t),
\end{equation*}
where $\mathcal{R}(t)$ is the interior truncation error, and $\bm{b}_h(t)$ is the boundary recovery error introduced by the FPR method. According to the theory of the FPR method, the boundary error satisfies
\begin{equation*}
    \|\bm{b}_h(t)\|_\infty = \mathcal{O}(h^{k+1}).
\end{equation*}
By the Lax-Richtmyer equivalence theorem \cite{Lax1956Survey}, we obtain
$$
\|\bm{e}_h(t)\|_\infty \le \|\mathcal{F}_h^{-1}\| \cdot (\|\mathcal{R}(t)\|_\infty + \|\bm{b}_h(t)\|_\infty) \le C_1 h^2 + C_2 h^{k+1}.
$$
Hence, the spatial discretization error is estimated by
$$
\| \bm{u}^*(t)-\bm{u}_h(t) \|_\infty = \mathcal{O}(h^2) + \mathcal{O}(h^{k+1}) = \mathcal{O}(h^{\min\{2,\, k+1\}}).
$$    
\end{proof}

\begin{remark}
The above analysis  can be directly extended  to  high-order CDF schemes. 
Specifically, suppose that a $p$-th order CDF discretization is applied in the interior, 
while the boundary is treated by the QBC-I(FPR$k$) approach. Then the numerical solution satisfies
\[
\|\bm{u}^*(t) - \bm{u}_h\| \le C_S \big(h^{\min\{p,k+1\}}\big).
\]
\end{remark}


\begin{remark}
The preceding results consider the QPE~\eqref{eq:intro_qpe_model} with
high-regularity coefficients. We now turn to the nonsmooth-coefficient case.
For such problems, the standard central difference approximation may lose accuracy,
so we employ a flux-difference discretization. In the 1D case,
\[
-\frac{d}{dx}\left(\alpha(x)\frac{du}{dx}\right)
\approx
-\frac{1}{h}\left(F_{i+\frac{1}{2}}-F_{i-\frac{1}{2}}\right),
\qquad
F_{i+\frac{1}{2}}
=
\alpha_{i+\frac{1}{2}}
\frac{u_{i+1}(t)-u_i(t)}{h}.
\]
The interface value $\alpha_{i+\frac{1}{2}}$ is computed by harmonic averaging
to ensure numerical stability and consistency~\cite{ewing2001modified}.
The resulting discretization retains second-order accuracy.
\end{remark}

\end{theorem}

For the temporal error analysis, by
standard error estimates for BDF methods (see, e.g., \cite{hairer1996solving}), we have the following result.

\begin{theorem}\label{th:QPETD}
Let $\bm{u}_h(t)$ be the solution of the semi-discrete problem \eqref{eq:semiode} for $t \in [0,T]$, with
$
\bm{u}_h \in C^{q+1}([0,T]).
$
Let $\bm{u}_h^M$ denote the fully discrete solution at time $t_M = T$ obtained by the BDF-$q$ scheme with time step $\tau$. Then there exists a constant $C_T > 0$, independent of $\tau$, such that
\begin{equation}
\|\bm{u}_h(t_M) - \bm{u}_h^M\|_\infty \le C_T \, \tau^q.
\end{equation}
\end{theorem}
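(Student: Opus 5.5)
The plan is the standard consistency-plus-stability argument for BDF-$q$ applied to the linear ODE system \eqref{eq:semiode}, with $A$ fixed for the given mesh (consistent with the dependence of $C_{S,T}$ on $S=\|A\|_\infty$ in \Cref{th:QPEFD}). Set $\bm{\varepsilon}^m=\bm{u}_h(t_m)-\bm{u}_h^m$.

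\textbf{Step 1: local truncation error.} Insert $\bm{u}_h(t)$ into \eqref{eq:bdfk_general}. Since $\bm{u}_h$ satisfies \eqref{eq:semiode} exactly at $t_{m+1}$, the defect is
\[
\bm{\delta}^{m+1}=\frac{1}{\tau}\sum_{j=0}^{q}c_j\bm{u}_h(t_{m+1-j})-\frac{d\bm{u}_h}{dt}(t_{m+1}).
\]
Taylor expand each $\bm{u}_h(t_{m+1-j})$ about $t_{m+1}$ with integral remainder. The BDF-$q$ order conditions $\sum_j c_j (-j)^l=l\,\delta_{l1}$ for $0\le l\le q$ then give, componentwise,
\[
\|\bm{\delta}^{m+1}\|_\infty\le C\tau^q\max_{[0,T]}\|\bm{u}_h^{(q+1)}\|_\infty .
\]
This uses the hypothesis $\bm{u}_h\in C^{q+1}([0,T])$. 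The starting values $\bm{u}_h^1,\dots,\bm{u}_h^{q-1}$ are assumed to be computed to accuracy $O(\tau^q)$, by exact data or a lower-order start with small steps.

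\textbf{Step 2: error recursion.} Subtracting the scheme from the perturbed identity gives
\[
(c_0I+\tau A)\bm{\varepsilon}^{m+1}=-\sum_{j=1}^{q}c_j\bm{\varepsilon}^{m+1-j}+\tau\bm{\delta}^{m+1}.
\]
Recast this as a one-step recursion $\bm{E}^{m+1}=\mathcal{C}(\tau A)\bm{E}^m+\tau\bm{D}^{m+1}$ for the stacked vector $\bm{E}^m=(\bm{\varepsilon}^m,\dots,\bm{\varepsilon}^{m-q+1})$. Here $\mathcal{C}$ is the companion matrix built from $(c_0I+\tau A)^{-1}c_j$. By discrete Duhamel,
\[
\bm{E}^M=\mathcal{C}^M\bm{E}^{q-1}+\tau\sum_m\mathcal{C}^{M-m}\bm{D}^m .
\]
The bound $\|\bm{E}^M\|\le C(\max\|\bm{E}^{q-1}\|+T\max\|\bm{\delta}\|)$, and hence the claim, follows once $\sup_{m\tau\le T}\|\mathcal{C}(\tau A)^m\|\le C_T$.

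\textbf{Step 3: power-boundedness (main obstacle).} I first want stability without exploiting any spectral sign of $A$. The nonlocal boundary rows of $B^{I}$ destroy symmetry and M-matrix structure, so dissipativity is not available. To get it, write $\mathcal{C}(\tau A)=\mathcal{C}(0)+\tau\mathcal{K}$, where $\|\mathcal{K}\|\le c(S)$ for $\tau\le\tau_0(S)$; the smallness ensures $(c_0I+\tau A)^{-1}$ exists with bounded norm. $\mathcal{C}(0)$ is the companion matrix of the BDF characteristic polynomial. Zero-stability of BDF-$q$ (root condition, valid for $q\le 6$) gives a norm $\|\cdot\|_*$ with $\|\mathcal{C}(0)\|_*\le1$. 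Then
\[
\|\mathcal{C}(\tau A)^m\|_*\le(1+c(S)\tau)^m\le e^{c(S)T},
\]
which is the discrete Gronwall analogue of the argument in \Cref{th:QPESD}. This yields $C_T$ depending on $S$ and $T$ but not on $\tau$, as in \cite{hairer1996solving}.

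The delicate points are these. The step-size restriction $\tau\le\tau_0(S)$ must be stated; for $\tau>\tau_0$ one would need A($\vartheta$)-stability plus a resolvent bound on $A$, which I would not attempt here. The statement implicitly requires $q\le6$, since BDF-$q$ is not zero-stable for $q\ge 7$. Finally, Steps 1 and 3 combine by norm equivalence on the fixed finite-dimensional space to give $\|\bm{u}_h(t_M)-\bm{u}_h^M\|_\infty\le C_T\tau^q$.
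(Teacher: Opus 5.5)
Your argument is correct and matches the paper's approach: the paper gives no proof of its own here, only an appeal to standard BDF error estimates (Hairer--Wanner), and your consistency/zero-stability/discrete-Gronwall argument is that classical estimate written out, with a constant $e^{c(S)T}$ depending on $S=\|A\|_\infty$ just as the paper's $C_{S,T}$ in \Cref{th:QPEFD} does. The hypotheses you make explicit ($O(\tau^q)$ starting values, $\tau\le\tau_0(S)$, and $q\le 6$) are ones the paper leaves implicit but genuinely needs; in particular, the unrestricted $q\geq 1$ in \Cref{th:QPEFD} fails for $q\geq 7$, where BDF-$q$ is not zero-stable.
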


\textbf{Proof of Theorem 4.1.} Using  \Cref{th:QPESD} and \Cref{th:QPETD}, we are now in a position to provide the error estimate for fully discrete scheme \eqref{eq:bdfk_general}.
\begin{proof}
By the triangle inequality, we have
$$
\|\bm{u}^*- \bm{u}_h^M\|_\infty \le \|\bm{u}(t_M) - \bm{u}_h(t_M)\|_\infty + \|\bm{u}_h(t_M) - \bm{u}_h^M\|_\infty.
$$
By \Cref{th:QPESD}, we obtain
$$
\|\bm{u}(t_M) - \bm{u}_h(t_M)\|_\infty \le C_S\, h^{\min\{p,k+1\}},
$$
and   \Cref{th:QPETD} yields
$$
\|\bm{u}_h(t_M) - \bm{u}_h^M\|_\infty \le C_T \, \tau^q.
$$
Combining these results gives the fully discrete error estimate
$$
\|\bm{u}^* - \bm{u}_h^M\|_\infty \le C_{S,T} \big(h^{\min\{p,k+1\}} + \tau^q \big),
$$
where $C_{S,T}$ depends on $S=\|A\|_\infty$ and the final time $T$. 
\end{proof}
\begin{remark}
For QBC-II and QBC-III, the convergence analysis follows essentially the same approach.
The main difference lies in the accuracy of approximating the derivative at the boundary.
Specifically, if CDF-$l$ is used to approximate the boundary derivative,
an additional error term of order $\mathcal{O}(h^l)$ arises in the spatial discretization.
Thus, the global error satisfies
\[
\|\bm{u}^*-\bm{u}_h^M\|_\infty
=
\mathcal{O}\bigl(h^{\min\{p,k+1,l\}}\bigr)
+
\mathcal{O}(\tau^q).
\]
\end{remark}

\section{Numerical experiments}\label{sec:num}
In this section, we solve the QPE~\eqref{eq:intro_qpe_model} using the CDF scheme in space and the BDF method in time, with QBCs imposed on the boundary. To demonstrate the adaptability and accuracy of the proposed QBCs, we conduct extensive numerical experiments for both high- and low-regularity cases. Since QBCs are imposed only in the spatial direction, the following numerical tests focus on spatial convergence. The expected temporal convergence orders of the BDF-q schemes for $q=2$ and $4$ have been verified separately, and the corresponding results are omitted for brevity. 
All experiments are carried out in MATLAB R2021b on a personal computer equipped with an AMD Ryzen 7 CPU and 16 GB of RAM.



To compute the spatial convergence order, we fix the time step $\tau$
and refine the spatial mesh. Denote by $\|\bm{e}_{h}^\tau\|_\infty$
the error at the final time.  
Then the spatial  order is given by
\[
\kappa = \frac{\ln \big(\|\bm{e}_{h_1}^\tau\|_\infty / \|\bm{e}_{h_2}^\tau\|_\infty \big)}{\ln(h_1/h_2)}.
\]

\subsection{High-regularity cases}
In this subsection, various numerical examples are presented to examine the convergence behavior of the numerical method with the proposed QBCs for solving the QPE \eqref{eq:intro_qpe_model} with smooth coefficient. 
\subsubsection{One-dimensional cases}\label{subsubsec:1dcase}
We first consider the 1D QPE \eqref{eq:intro_qpe_model} to investigate the numerical performance of the proposed method. The quasiperiodic coefficient and the exact solution are given by
\begin{equation}\label{eq:ustar_1d}
  \alpha(x)
=6+\cos x+\sum_{i=1}^{s}\cos(\beta_i x),\qquad   u^*(x,t)
=e^{-t}\left(\sin x+\sum_{i=1}^{s}\sin(\beta_i x)\right),
\end{equation}
where $x\in\mathbb{R}$  and $\beta_i\in\mathbb{R}\setminus\mathbb{Q}$. 
The computational domain is \(\Omega=[-L\pi,L\pi]\), and the final time is \(T=2.5\). The spatial and temporal discretizations are based on CDF-2 and BDF-2, respectively.  
To examine the spatial convergence, we fix $\tau=1/512$ and refine the mesh size as $h=\pi/(10\cdot 2^j),~ j = 1, 2, 3, 4$.
 Spatial errors and convergence rates are reported, with \(L=700\) used for QBC-I/II and \(L=500\) for QBC-III. 
As shown in \Cref{fig:qbc_all_ell_infty}, the FPR-$k$ schemes $(k=1,2,3)$ with QBC-I, II, and III achieve second-order convergence in space, 
confirming the expected accuracy of the spatial discretization.
Moreover, separate temporal convergence tests, not reported here, show that the BDF-2 method also attains second-order accuracy in time.

\begin{figure}[htbp]
    \centering
    \setlength{\tabcolsep}{2pt}
    \begin{tabular}{ccc}
        \includegraphics[width=0.32\textwidth]{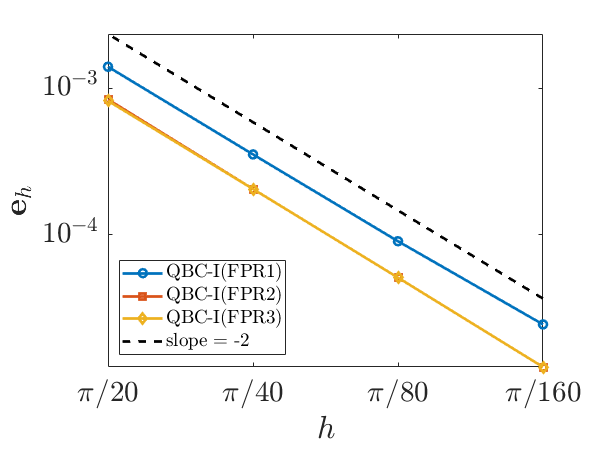} &
        \includegraphics[width=0.32\textwidth]{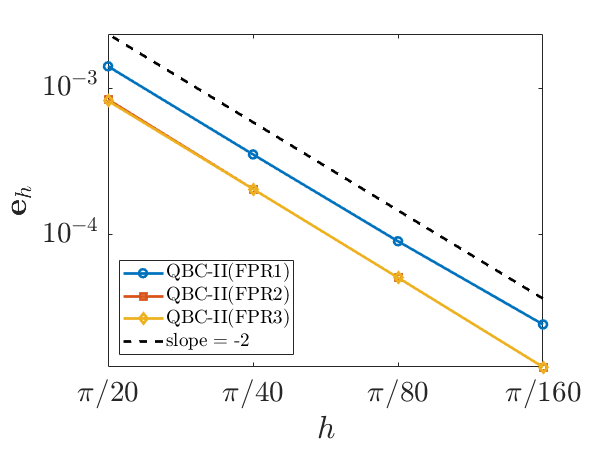} &
        \includegraphics[width=0.32\textwidth]{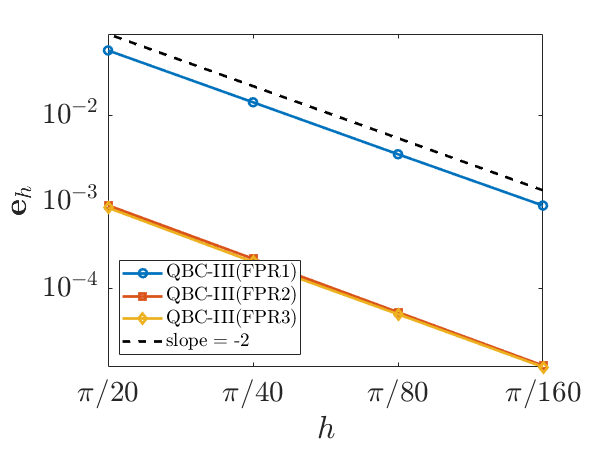}
    \end{tabular}
  \caption{
Second-order spatial convergence of QBC-I, II, and III for the 1D QPE~\eqref{eq:intro_qpe_model} with $p=2$ and $\beta_1=(\sqrt{5}-1)/2$.
}
    \label{fig:qbc_all_ell_infty}
\end{figure}

The proposed QBCs are also applicable to higher-order discretizations. To illustrate this flexibility, we consider the 1D QPE~\eqref{eq:ustar_1d} using QBC-I together with CDF-4 in space and BDF-4 in time. In this and the subsequent examples, QBC-I is adopted as a representative case, while analogous results can be obtained for QBC-II and QBC-III.

The numerical results in \Cref{fig:qbc_I_ho_ell_infty} are consistent with
the error estimate in \Cref{th:QPEFD}, which gives the
spatial convergence order $\min\{4,k+1\}$ for $p=4$.
Accordingly, QBC-I(FPR1) and QBC-I(FPR2) achieve approximately second- and
third-order spatial convergence, respectively, while QBC-I(FPR$k$) with
$k\geq 3$ attains fourth-order convergence.
These results agree with the preceding numerical analysis and confirm the compatibility of QBCs with higher-order discretizations.


\begin{figure}[htbp]
\centering
\includegraphics[width=0.45\linewidth]{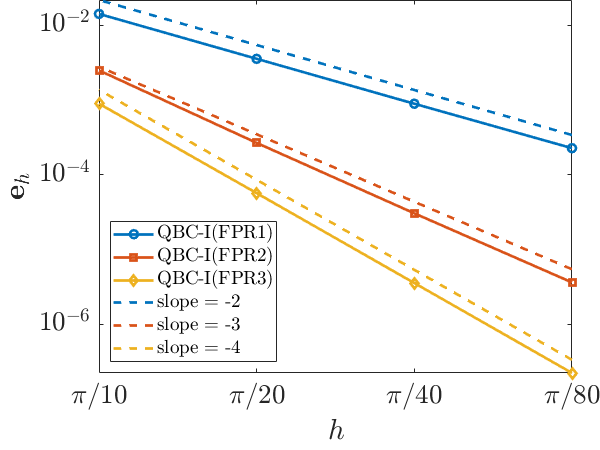}
\caption{
Fourth-order convergence of QBC-I for the 1D QPE~\eqref{eq:intro_qpe_model} with $p=4$, $\beta_1=(\sqrt{5}-1)/2$ and $L=2000$.
}
\label{fig:qbc_I_ho_ell_infty}
\end{figure}


Having tested the proposed QBCs for cases with a single irrational number, we next consider a more general case involving multiple irrational frequencies. Specifically, the QPE~\eqref{eq:intro_qpe_model} is solved with the quasiperiodic coefficient and exact solution involving $ \beta_1=\pi$, $\beta_2=\sqrt{3}$. The spatial and temporal discretizations are based on CDF-2 and BDF-2, respectively, and QBC-I is imposed at the boundary with $L=6000$. As shown in \Cref{fig:qbc_I_ell_infty_1to3}, the QBC-I(FPR$k$) schemes $(k=1,2,3)$ achieve second-order convergence in space. 
These results show that the proposed QBC framework remains effective for QPE involving multiple irrational numbers.


\begin{figure}[!htbp]
\centering
\includegraphics[width=0.45\linewidth]{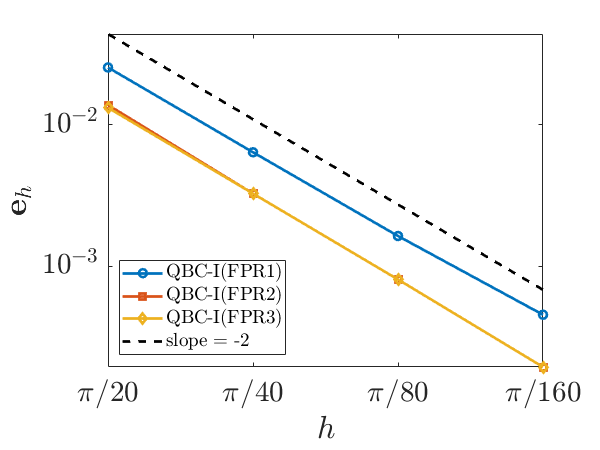}
\caption{Spatial accuracy of QBC-I with $p=2$ for the 1D QPE~\eqref{eq:intro_qpe_model} with $\beta_1=\pi$ and $\beta_2=\sqrt{3}$.}
\label{fig:qbc_I_ell_infty_1to3}
\end{figure}

\textbf{QBC vs. PBC.}
~In this part, we provide a comprehensive comparison between QBC and PBC for solving the 1D finite-size QPE~\eqref{eq:intro_qpe_model}. 
\Cref{app:PBC} presents the numerical implementation of PBCs.

\textbf{Case 1:   $\beta_1=\sqrt{2}$.}
We first compare PBC and QBC-I for the 1D QPE~\eqref{eq:intro_qpe_model} involving the single irrational frequency $\beta_1=\sqrt{2}$. To make the spatial discretization error negligible
in the comparison between PBC and QBC-I, we take a sufficiently small mesh
size $h=\pi/10240$ and present the corresponding errors in \Cref{tab:error_pbc_qbc_p24}. The Diophantine error of  $\sqrt{2}$  is shown in   \Cref{fig:pbc_sr5_2}, where the red points indicate the best-approximation sequence. 
\begin{figure}[!htbp]
\centering
    \includegraphics[width=0.7\linewidth]{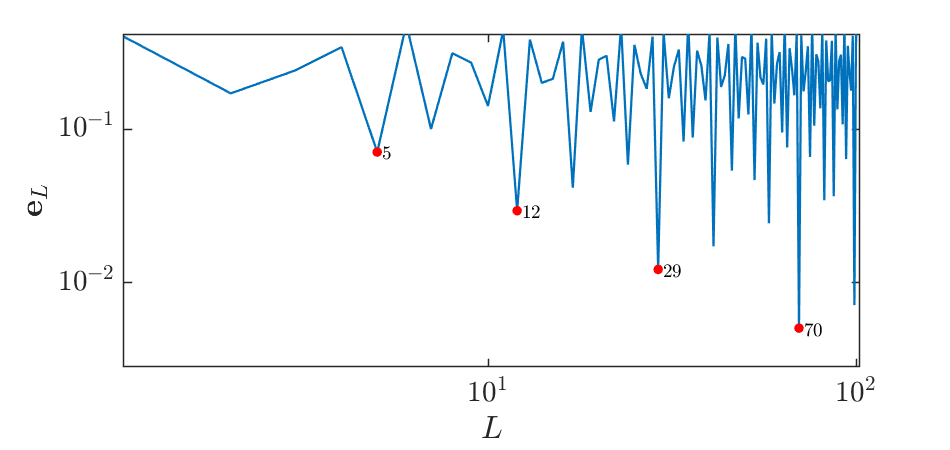}
\caption{Diophantine errors  of $\sqrt{2}$. }
\label{fig:pbc_sr5_2}
\end{figure}
For PBC, the error does not decrease monotonically with increasing domain size $L$ and decreases consistently only when $L$ is chosen from the best-approximation sequence, which includes the bold values $L=12$ and $L=70$.
For example, the error at $L=43$ is larger than that at $L=12$. 
For QBC-I, the error decreases as $L$ increases. 
Moreover, higher-order discretizations further reduce the error. Specifically, the QBC-I errors decrease from $4.21\times10^{-3}$ at $L=12$ to $2.16\times10^{-5}$ at $L=70$ with CDF-2, and from $2.00\times10^{-4}$ to $3.09\times10^{-8}$ over the same range of $L$ with CDF-4.
The PBC errors, however, remain unchanged for CDF-2 and CDF-4. 
Notably, the smallest PBC error is $1.58\times10^{-2}$, whereas
QBC-I(FPR1) achieves a smaller error of $4.21\times10^{-3}$ using only
about $17.1\%$ of the domain size required to obtain the best PBC result.
These results show that QBC-I significantly reduces the influence of the
traditional Diophantine error.

\begin{table}[htbp]
\centering
\caption{
Error comparison between PBC and QBC-I for the 1D QPE~\eqref{eq:intro_qpe_model}
with $\beta_1=\sqrt{2}$, where $h=\pi/10240$.
}

\label{tab:error_pbc_qbc_p24}

\small
\renewcommand{\arraystretch}{1.15}
\setlength{\tabcolsep}{6pt}

\begin{tabular}{cccccc}
\toprule
& &
\multicolumn{2}{c}{$p=2$}
&
\multicolumn{2}{c}{$p=4$}
\\
\cmidrule(lr){3-4}
\cmidrule(lr){5-6}

$L$ & $\bm{e}_L$
& PBC
& QBC-I(FPR1)
& PBC
& QBC-I(FPR3)
\\
\midrule

$\bm{12}$
& $2.94\mathrm{e}{-02}$
& $9.20\mathrm{e}{-02}$
& $4.21\mathrm{e}{-03}$
& $9.20\mathrm{e}{-02}$
& $2.00\mathrm{e}{-04}$
\\

$31$
& $1.59\mathrm{e}{-01}$
& $4.78\mathrm{e}{-01}$
& $3.12\mathrm{e}{-03}$
& $4.78\mathrm{e}{-01}$
& $2.47\mathrm{e}{-05}$
\\

$43$
& $1.89\mathrm{e}{-01}$
& $5.56\mathrm{e}{-01}$
& $2.24\mathrm{e}{-03}$
& $5.56\mathrm{e}{-01}$
& $9.27\mathrm{e}{-06}$
\\

$\bm{70}$
& $5.05\mathrm{e}{-03}$
& $1.58\mathrm{e}{-02}$
& $2.16\mathrm{e}{-05}$
& $1.58\mathrm{e}{-02}$
& $3.09\mathrm{e}{-08}$
\\

\bottomrule
\end{tabular}
\end{table}

\textbf{Case 2: $\beta_1= (\sqrt{5}-1)/2$, $\beta_2=\sqrt{2}$.} We next extend the comparison to a 1D QPE~\eqref{eq:intro_qpe_model} with two irrational frequencies $(\sqrt{5}-1)/{2}, \sqrt{2}.$  The Diophantine error is shown in \Cref{fig:pbc_tw_ir_num_gold_s2}, and the highlighted red dots denote the best approximation sequences.  The error comparison between PBC and QBC-I is reported in \Cref{tab:error_pbc_qbc_p24_gold_s2}.  
\begin{figure}[!htbp]
\centering
    \includegraphics[width=0.7\linewidth]{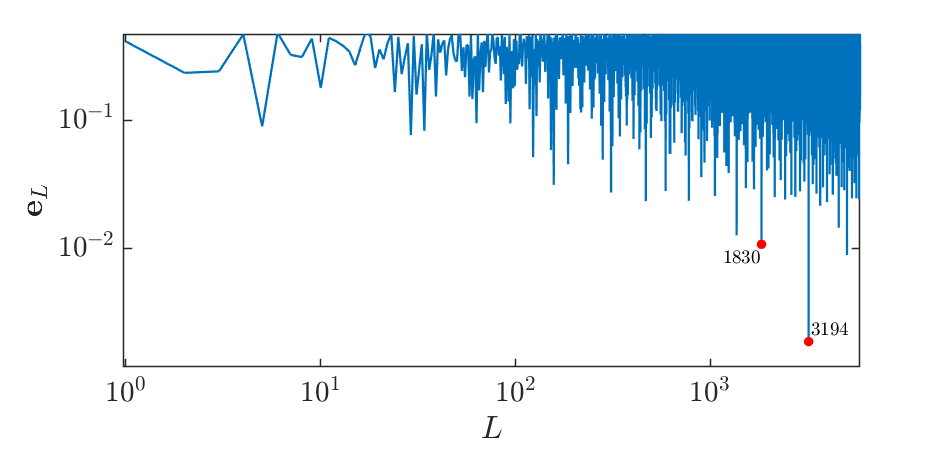}
\caption{Diophantine errors  of $(\sqrt{5}-1)/2$, $\sqrt{2}$. }
\label{fig:pbc_tw_ir_num_gold_s2}
\end{figure}
Similar to the single-irrational-frequency case, the PBC error is affected by the Diophantine error and does not decrease monotonically as the domain size $L$ increases. 
The smallest PBC error is
$7.66\times10^{-3}$ at the best-approximation length $L=3194$.
In contrast, QBC-I(FPR1) already achieves a smaller error of
$2.68\times10^{-3}$ at $L=1601$, which is about $50.1\%$ of the domain
size required by PBC. These results further demonstrate that QBC-I
significantly reduces the influence of the Diophantine error and achieves
higher accuracy on a smaller computational domain than PBC.
\begin{table}[htbp]
\centering
\caption{
Error comparison between PBC and QBC-I for the 1D QPE~\eqref{eq:intro_qpe_model}
with $\beta_1=(\sqrt{5}-1)/2$, $\beta_2=\sqrt{2}$, where $h=\pi/10240$.
}
 \label{tab:error_pbc_qbc_p24_gold_s2}

\small
\renewcommand{\arraystretch}{1.15}
\setlength{\tabcolsep}{6pt}

\begin{tabular}{cccccc}
\toprule
& &
\multicolumn{2}{c}{$p=2$}
&
\multicolumn{2}{c}{$p=4$}
\\
\cmidrule(lr){3-4}
\cmidrule(lr){5-6}

$L$ & $\bm{e}_L$
& PBC
& QBC-I(FPR1)
& PBC
& QBC-I(FPR3)
\\
\midrule

$1601$
& $4.72\mathrm{e}{-01}$
& $5.25\mathrm{e}{-01}$
& $2.68\mathrm{e}{-03}$
& $5.25\mathrm{e}{-01}$
& $2.32\mathrm{e}{-05}$
\\

$\bm{1830}$
& $1.08\mathrm{e}{-02}$
& $2.69\mathrm{e}{-02}$
& $5.50\mathrm{e}{-05}$
& $2.69\mathrm{e}{-02}$
& $1.90\mathrm{e}{-07}$
\\

$\bm{3194}$
& $1.88\mathrm{e}{-03}$
& $7.66\mathrm{e}{-03}$
& $1.90\mathrm{e}{-05}$
& $7.66\mathrm{e}{-03}$
& $9.62\mathrm{e}{-08}$
\\

$5176$
& $5.61\mathrm{e}{-02}$
& $7.92\mathrm{e}{-02}$
& $6.54\mathrm{e}{-06}$
& $7.92\mathrm{e}{-02}$
& $1.96\mathrm{e}{-08}$
\\

\bottomrule
\end{tabular}
\end{table}

\textbf{Case 3: $\beta_1=\pi$, $\beta_2=\sqrt{3}$.} 
The previous two-irrational-number example involves irrational numbers with similar approximation behavior. We now consider a pair with different Diophantine approximation properties, namely $\pi$ and $\sqrt{3}$. This setting is more challenging for PBC, since the corresponding Diophantine error becomes small only for much larger values of $L$. Therefore, the periods $L=2034$ and $L=12995$, selected from the best-approximation sequence in \Cref{fig:pbc_tw_ir_num_s3pi}, are used in the error comparison. 
\begin{figure}[!htbp]
\centering
    \includegraphics[width=0.7\linewidth]{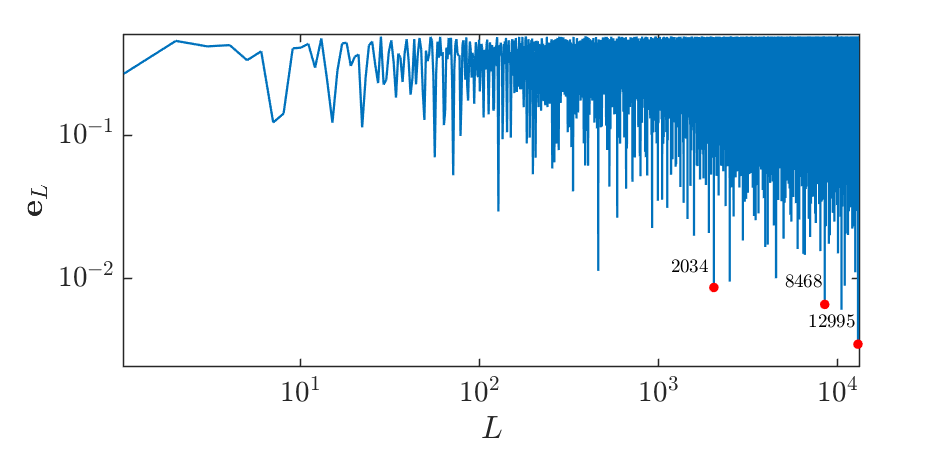}
\caption{Diophantine errors  of $\sqrt{3}$, $\pi$. }
\label{fig:pbc_tw_ir_num_s3pi}
\end{figure}
Similarly, \Cref{tab:error_pbc_qbc_p24_pi_s3} shows that QBC-I significantly reduces the influence of the Diophantine error.
The smallest PBC error is $1.16\times10^{-2}$ at the best-approximation
length $L=12995$. In contrast, QBC-I(FPR1) already achieves a smaller error
of $1.53\times10^{-3}$ at $L=1800$, which is only about $13.9\%$ of the
domain size required by PBC. 
These results confirm that QBCs remain effective for irrational frequencies with different approximation properties.

\begin{table}[htbp]
\centering
\caption{
Error comparison between PBC and QBC-I for the 1D QPE~\eqref{eq:intro_qpe_model} with $\beta_1=\pi$, $\beta_2=\sqrt{3}$, where $h=\pi/10240$.
}
\label{tab:error_pbc_qbc_p24_pi_s3}

\small
\renewcommand{\arraystretch}{1.15}
\setlength{\tabcolsep}{6pt}

\begin{tabular}{cccccc}
\toprule
& &
\multicolumn{2}{c}{$p=2$} &
\multicolumn{2}{c}{$p=4$} \\
\cmidrule(lr){3-4}
\cmidrule(lr){5-6}

$L$ & $\bm{e}_L$
& PBC & QBC-I(FPR1)
& PBC & QBC-I(FPR3) \\
\midrule

$1800$
& $3.09\mathrm{e}{-01}$
& $4.17\mathrm{e}{-01}$
& $1.53\mathrm{e}{-03}$
& $4.17\mathrm{e}{-01}$
& $9.80\mathrm{e}{-06}$ \\

$\bm{2034}$
& $8.66\mathrm{e}{-03}$
& $2.54\mathrm{e}{-02}$
& $7.99\mathrm{e}{-04}$
& $2.28\mathrm{e}{-02}$
& $5.68\mathrm{e}{-06}$ \\

$8493$
& $4.54\mathrm{e}{-01}$
& $1.66\mathrm{e}{-01}$
& $2.32\mathrm{e}{-04}$
& $1.66\mathrm{e}{-01}$
& $1.62\mathrm{e}{-08}$ \\

$\bm{12995}$
& $3.47\mathrm{e}{-03}$
& $1.16\mathrm{e}{-02}$
& $3.28\mathrm{e}{-06}$
& $1.16\mathrm{e}{-02}$
& $2.05\mathrm{e}{-09}$ \\

\bottomrule
\end{tabular}
\end{table}

\subsubsection{Two-dimensional case} Having examined the one-dimensional case, we next consider high-dimensional QPEs~\eqref{eq:intro_qpe_model} to further demonstrate that the proposed QBCs remain effective in higher dimensions.
We consider the quasiperiodic coefficient
\begin{equation*}
    \alpha(\bx)
    =6 +\sum_{i=1}^r\cos \bm{\xi}_i\cdot\bx+\sum_{i=1}^{s}\cos \bm{\beta}_i\cdot\bx , \qquad\bx\in\mathbb{R}^2.
\end{equation*}
The exact solution is
\begin{equation*}
u^*(\bx,t)
= e^{-t}\left(\sum_{i=1}^r\sin \bm{\xi}_i\cdot\bx+\sum_{i=1}^{s}\sin \bm{\beta}_i\cdot \bx \right), \qquad\bx\in\mathbb{R}^2,
\end{equation*}
where $\bm{\beta}_i$ contains irrational components. 
In the computation, we take $\bm {\xi}_1=(1,0)^T$, 
$\bm {\xi}_2=(0,1)^T,$ 
$\bm{\beta}_1=(\sqrt{2},0)^T$. 
The computational domain is  
$
\Omega = [-100\pi,100\pi] \times [-100\pi,100\pi].
$
We employ the CDF-2 scheme for spatial discretization and the BDF-2 scheme for temporal discretization. 
The results in \Cref{fig:qbc_I_ell_infty_2to3} show that QBC achieves second-order convergence in space for QBC-I,  further confirming its effectiveness for higher-dimensional QPEs.

\begin{figure}[htbp]
\centering
\includegraphics[width=0.45\linewidth]{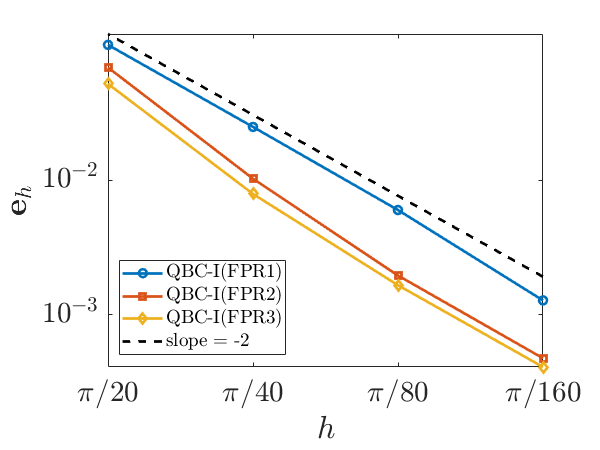}
\caption{
Second-order spatial convergence of QBC-I for the 2D QPE~\eqref{eq:intro_qpe_model} with $p=2$ and $\bm{\beta}_1=(\sqrt{2},0)^T$.}
\label{fig:qbc_I_ell_infty_2to3}
\end{figure}
\subsection{Low-regularity cases} 

We next test the proposed QBCs for low-regularity quasiperiodic coefficients. Two examples are considered. The first one is the QPE~\eqref{eq:intro_qpe_model} with the following $C^0$ quasiperiodic coefficient
\begin{equation}
\label{eq:c0_quasi_coeffi}
\alpha(x)
= 6+|\cos x|+\sum_{i=1}^{s}\left|\cos \beta_i x\right|,\qquad x\in \mathbb{R},~ \beta_i\in\mathbb{R}\setminus\mathbb{Q}.
\end{equation}
The exact solution is chosen in the form of \eqref{eq:ustar_1d}. In this test, we take $\beta_1=\sqrt{2}$, and set the computational domain to $\Omega=[-700\pi,700\pi]$.

The second example considers Fibonacci quasiperiodic coefficient. The Fibonacci sequences are generated recursively by
\begin{equation*}
\mathcal{F}_0=A,\quad
\mathcal{F}_1=AB,\quad
\mathcal{F}_i=\mathcal{F}_{i-1}\mathcal{F}_{i-2},
\quad i\geq 2.
\end{equation*}
Replacing the letters $A$ and $B$ in the Fibonacci sequence by $3$ and $4$, respectively. The coefficient $\alpha(x)$ is then defined to take the corresponding values of this sequence successively on the intervals
$
[(j+1)\pi,(j+2)\pi),~ j=0,1,\ldots
$
and the structure of the Fibonacci coefficient is illustrated in
\Cref{fig:fbnq}.
The computational domain is chosen as $\Omega=[\pi,100\pi]$. In both tests, the computation is performed using the second-order discretization scheme together with QBC-I.

\begin{figure}[htbp]
\centering
    \includegraphics[width=0.4\linewidth]{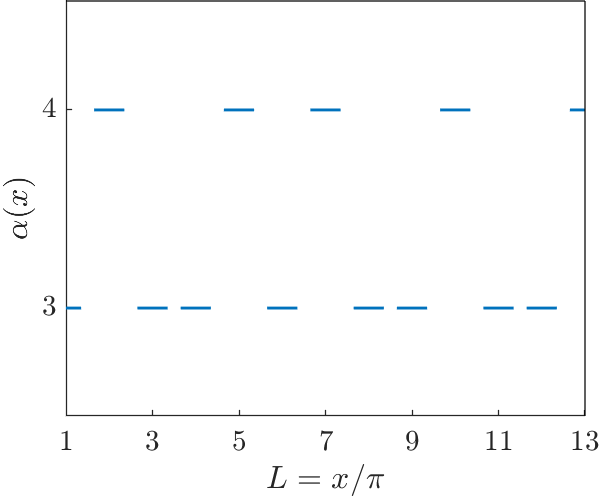}
\caption{The Fibonacci coefficient $\alpha(x)$ on $x\in[\pi, 13\pi]$.}
\label{fig:fbnq}
\end{figure}

The spatial convergence results are reported in \Cref{fig:qbc_I_ell_infty_lowreg}. For the $C^0$ coefficient \eqref{eq:c0_quasi_coeffi}, QBC-I(FPR$k$) $(k=1,2,3)$ achieves second-order convergence in space. For the Fibonacci coefficient, we present only the results of QBC-I(FPR1). This is because the Fibonacci coefficient is piecewise constant, which limits the regularity of the corresponding model and makes the overall error dominated by the low-regularity effect. Nevertheless, QBC-I(FPR1) still attains the expected second-order convergence. These results indicate that the proposed QBCs remain effective for low-regularity quasiperiodic coefficients.

\begin{figure}[H]
\centering
\begin{minipage}[t]{0.43\linewidth}
    \centering
    \includegraphics[width=\linewidth]{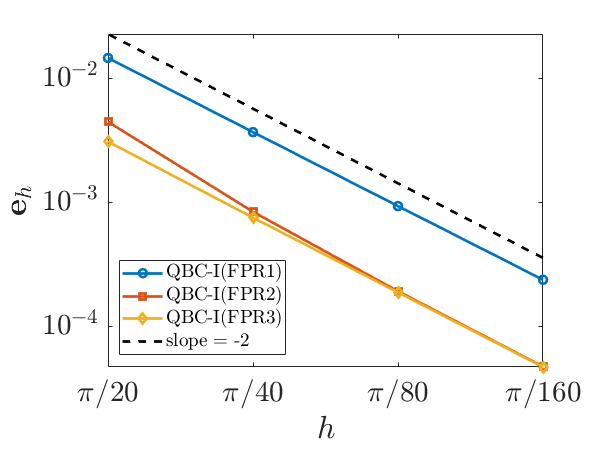}
    \caption*{(a)}
\end{minipage}
\hfill
\begin{minipage}[t]{0.43\linewidth}
    \centering
    \includegraphics[width=\linewidth]{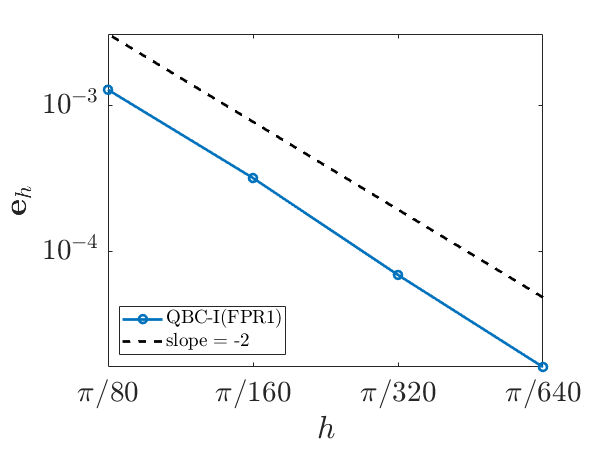}
    \caption*{(b)}
\end{minipage}
\caption{
Spatial convergence results of QBC-I for the 1D QPE~\eqref{eq:intro_qpe_model} with $p=2$ and low-regularity quasiperiodic coefficients,
(a) the $C^0$ coefficient~\eqref{eq:c0_quasi_coeffi};
(b) the Fibonacci coefficient shown in \Cref{fig:fbnq}.
}
\label{fig:qbc_I_ell_infty_lowreg}
\end{figure}

\textbf{QBC \textbf{vs.} PM.}~~ 
To further evaluate the accuracy and efficiency of QBCs,  we compare QBC-I with the PM in solving low-regularity QPEs \eqref{eq:intro_qpe_model} with  the $C^0$ coefficient~\eqref{eq:c0_quasi_coeffi} 
and the Fibonacci coefficient shown in \Cref{fig:fbnq}. The final time and time step are fixed as $T=10^{-4}$ and $\tau=10^{-6}$, respectively. For the QBC-I computations, we use $\Omega=[-210\pi,210\pi]$ in the $C^0$ case and $\Omega=[\pi,100\pi]$ in the Fibonacci case.
 \begin{figure}[htbp!]
\centering
\subfigure[]{
\begin{minipage}[t]{0.47\linewidth}
    \centering
    \includegraphics[width=\linewidth]{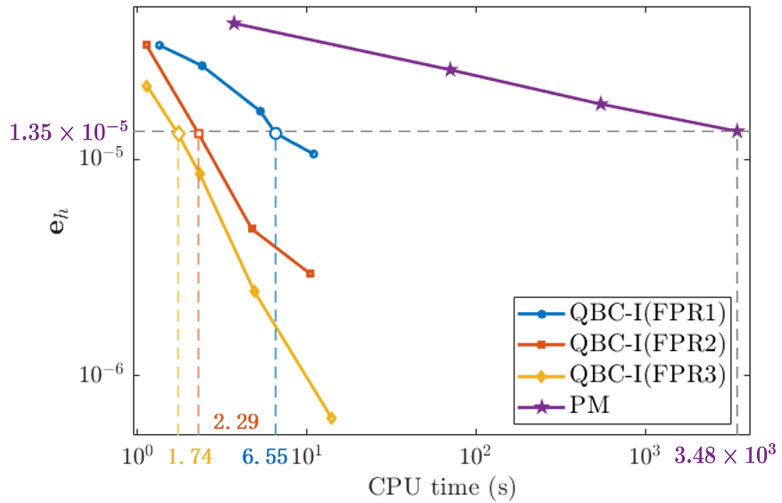}
     \label{fig:pmvsqbc_c0}
\end{minipage}%
}
\hfill
\subfigure[]{
\begin{minipage}[t]{0.47\linewidth}
    \centering
    \includegraphics[width=\linewidth]{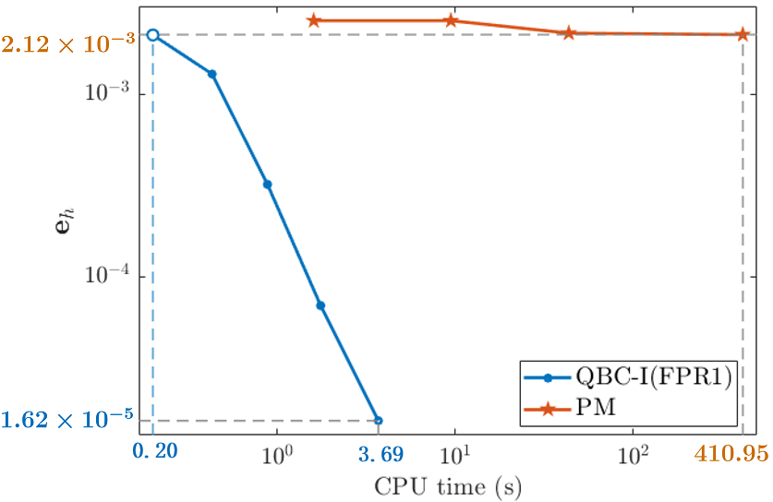}
     \label{fig:pmvsqbc_fbnq}
\end{minipage}
}
\caption{
Cost--accuracy trade-off between QBC-I and PM for the 1D QPE~\eqref{eq:intro_qpe_model} with $p=2$ and low-regularity
quasiperiodic coefficients,
(a) the $C^0$ coefficient~\eqref{eq:c0_quasi_coeffi};
(b) the Fibonacci coefficient.
}
\label{fig:pmvsqbc_c0_and_fbnq}
\end{figure}
We first compare the computational complexity of PM and QBC-I. For the PM, the computational complexity is $\mathcal{O}(D\log D)$, where $D=N^2$ denotes the degrees of freedom and $N$ is the number of grid points in each direction. 
Meanwhile, for QBC-I(FPR$k$), the computational complexity scales as $3N+(k+1)^2\sim \mathcal{O}(N)$, since $k\ll N$. 
The accuracy-efficiency comparison between QBC-I and the PM is then presented in \Cref{fig:pmvsqbc_c0_and_fbnq}.
For the $C^0$ coefficient, the results of QBC-I(FPR$k$) ($k=1,2,3$) are shown in \Cref{fig:pmvsqbc_c0}. 
Under the same computational conditions, the PM with $N=128$ reaches an accuracy level of approximately $1.35\times 10^{-5}$, with a CPU time of approximately $3.48\times10^3$s. 
To achieve the same accuracy level, QBC-I(FPR1) requires only $6.55$s. 
The CPU time is further reduced to $2.29$s for QBC-I(FPR2), while QBC-I(FPR3) attains this accuracy in only $1.74$s. More importantly, QBC-I(FPR3) can further reduce the error below $10^{-6}$ within $10.00$s, outperforming the PM in both accuracy and efficiency.
For the Fibonacci coefficient, this advantage becomes more pronounced, since this coefficient is piecewise constant.  As shown in \Cref{fig:pmvsqbc_fbnq}, the PM requires about $410.95$s to reach the accuracy level of $10^{-3}$, whereas QBC-I(FPR1) requires only $0.20$s to achieve the same accuracy level. Meanwhile, QBC-I(FPR1) reduces the error to $1.62\times 10^{-5}$ in only $3.69$s, indicating that QBC-I can still achieve high accuracy at a very small computational cost in this case. Overall, QBC-I exhibits a significant accuracy-efficiency advantage for low-regularity QPEs. 

\section{Conclusions and outlooks}\label{sec:conc}
In this paper, we propose QBCs for finite-size computation of QPEs. Specifically, we construct three types of QBCs based on the inherent homomorphism between $\mathbb{R}^d$ and the irrational manifold $(\bm{P}^T\mathbb{R}^d)/\mathbb{Z}^n$. 
The QBCs preserve the quasiperiodic structure inherited from the full-space quasiperiodic field of the QPE on finite-size domains.
For the numerical implementation of QBCs, we employ the FPR method to recover the required boundary information from interior samples. We further establish an error analysis for the resulting fully discrete schemes. Numerical experiments for both high- and low-regularity cases confirm the accuracy and efficiency of the proposed QBCs. Compared with PBCs, QBCs substantially reduce the influence of the traditional Diophantine error. Moreover, in low-regularity cases, the numerical method with QBCs outperforms the PM in terms of both accuracy and efficiency.
Future study will extend the proposed QBCs beyond QPEs to a broader class of quasiperiodic PDEs and will further develop their theoretical framework. 
Furthermore, we will improve the numerical implementation of QBCs by introducing  more general numerical discretizations and generalizing the FPR interpolation.
More importantly, we will explore QBCs in more complex quasiperiodic systems to uncover new physical phenomena.
\appendix

\section{Discretization of QPEs with PBCs}
\label{app:PBC} 
Consider the QPE~\eqref{eq:intro_qpe_model} on the one-dimensional domain
$\Omega=(a,b)$, where the PBC is given by
\[
    u(a,t)=u(b,t),
    \qquad t\in(0,T).
\]
The corresponding discrete periodic condition is
$
    u_0(t)=u_N(t).
$
The spatial discretization employs a CDF-$p$ scheme. Taking CDF-2 as a
representative example, the semi-discrete system under the PBC is written as 
\[
    \frac{\mathrm d\bm{u}_h(t)}{\mathrm dt}
    +
    A\bm{u}_h(t)
    =
    \bm{f}_h(t),
\]
where
$
    A=-h^{-2}H,
$
and
\[
\scalebox{0.72}{$
H =
\begin{bmatrix}
-(\alpha_{N-1/2}+\alpha_{1/2})
& \alpha_{1/2}
& 0
& \cdots
& 0
& \alpha_{N-1/2}
& 0
\\
\alpha_{1/2}
& -(\alpha_{1/2}+\alpha_{3/2})
& \alpha_{3/2}
& \ddots
& \vdots
& 0
& 0
\\
0
& \ddots
& \ddots
& \ddots
& \ddots
& \vdots
& \vdots
\\
0
& \cdots
& 0
& \alpha_{N-3/2}
& -(\alpha_{N-3/2}+\alpha_{N-1/2})
& \alpha_{N-1/2}
& 0
\\
0
& \alpha_{1/2}
& 0
& \cdots
& 0
& \alpha_{N-1/2}
& -(\alpha_{N-1/2}+\alpha_{1/2})
\end{bmatrix}.
$}
\]
Applying the BDF-$q$ scheme to the semi-discrete system yields the fully
discrete formulation
\[
    \frac{1}{\tau}
    \sum_{j=0}^{q}
    c_j\bm{u}_h^{m+1-j}
    +
    A\bm{u}_h^{m+1}
    =
    \bm{f}_h^{m+1}.
\]

\end{document}